\theoremstyle{plain}
\newtheorem{theorem}{Theorem}[section]
\newtheorem{corollary}[theorem]{Corollary}
\newtheorem{lemma}[theorem]{Lemma}
\newtheorem{proposition}[theorem]{Proposition}
\theoremstyle{definition}
\newtheorem{definition}[theorem]{Definition}
\newtheorem{example}[theorem]{Example}
\newtheorem{remark}[theorem]{Remark}
\newtheorem*{openproblem}{Open Problem}
\numberwithin{equation}{section}
\newcommand{\R}{{\mathbb R}}
\newcommand{\N}{{\mathbb N}}
\providecommand{\vint}[1]{\mathchoice
          {\mathop{\vrule width 5pt height 3 pt depth -2.5pt
                  \kern -9pt \kern 1pt\intop}\nolimits_{\kern -5pt{#1}}}
          {\mathop{\vrule width 5pt height 3 pt depth -2.6pt
                  \kern -6pt \intop}\nolimits_{\kern -3pt{#1}}}
          {\mathop{\vrule width 5pt height 3 pt depth -2.6pt
                  \kern -6pt \intop}\nolimits_{\kern -3pt{#1}}}
          {\mathop{\vrule width 5pt height 3 pt depth -2.6pt
                  \kern -6pt \intop}\nolimits_{\kern -3pt{#1}}}}
\newcommand{\eps}{\varepsilon}
\newcommand{\loc}{{\mbox{\scriptsize{loc}}}}
\newcommand{\BV}{\mathrm{BV}}
\newcommand{\liploc}{\mathrm{Lip}_{\mathrm{loc}}}
\DeclareMathOperator{\capa}{Cap}
\DeclareMathOperator{\rcapa}{cap}
\DeclareMathOperator{\dist}{dist}
\DeclareMathOperator{\Lip}{Lip}
\DeclareMathOperator{\supp}{supp}
\begin{document}
\title{A notion of fine continuity for $\BV$ functions \\
on metric spaces
\footnote{{\bf 2010 Mathematics Subject Classification}: 30L99, 26B30, 43A85.
\hfill \break {\it Keywords\,}: bounded variation, metric measure space, quasicontinuity, fine topology, fine continuity, upper hemicontinuity.
}}
\author{Panu Lahti\\
Mathematical Institute, University of Oxford,\\ Andrew Wiles Building,\\
Radcliffe Observatory Quarter, Woodstock Road,\\
Oxford, OX2 6GG.\\
E-mail: {\tt lahti@maths.ox.ac.uk}
}
\maketitle

\begin{abstract} 
In the setting of a metric space equipped with a doubling measure supporting a Poincar\'e inequality, we show that $\BV$ functions are, in the sense of multiple limits, continuous with respect to a $1$-\emph{fine topology}, at almost every point with respect to the codimension $1$ Hausdorff measure.
\end{abstract}

\noindent {\bf Acknowledgment:} The research was funded by a grant from the Finnish Cultural Foundation. The author wishes to thank Nageswari Shan\-muga\-lingam for useful feedback on the paper. He also wishes to thank the
University of Cincinnati, in which part of the research for this paper was conducted, for its kind hospitality.

\newpage
\section{Introduction}

It is known, in the generality of a metric measure space $(X,d,\mu)$ equipped with a doubling measure $\mu$ supporting a Poincar\'e inequality, that Newton-Sobolev functions $u\in N^{1,p}(X)$ are $p$-quasicontinuous. This means that there exists an open set $G\subset X$ of small $p$-capacity such that the restriction $u|_{X\setminus G}$ is continuous, see e.g. \cite{BBS1} or \cite{BB}. From this, one can derive another result, which states that Newton-Sobolev functions are \emph{$p$-finely continuous} at
$p$-quasi every point, that is, almost every point with respect to the $p$-capacity, see \cite{JaBj} or \cite{Kor} or \cite[Theorem 11.40]{BB}. The concept of $p$-fine continuity means continuity with respect to a suitable topology, the $p$-fine topology, which is somewhat stronger than the metric topology. For previous results on fine topology and fine continuity in the Euclidean setting, see also e.g. \cite{Car,HKM,MaZie}.

In \cite{LaSh} it was shown that $\BV$ functions on metric spaces are $1$-quasi\-con\-ti\-nu\-ous in the sense of multiple limits. In this paper we introduce a notion of $1$-fine topology, and show that $\BV$ functions are $1$-finely continuous (that is, continuous with respect to the $1$-fine topology) at $1$-quasi every point, again in the sense of multiple limits. This is given in Theorem~\ref{thm:fine continuity}. Instead of $1$-quasi every point, one may equivalently speak about $\mathcal H$-almost every point, where $\mathcal H$ is the codimension $1$ Hausdorff measure.

Our definition of the $1$-fine topology is based on a concept of $1$-thinness, which is analogous to a concept of $p$-fatness, with $p>1$, given in the metric setting in \cite{BMS} and originally defined in \cite{Lew}. Let us also note that the proofs for fine continuity given in \cite{JaBj} and \cite{Kor} involve the theory of $p$-harmonic functions, for $p>1$. While some results on $1$-harmonic functions, known as functions of least gradient, have been derived in \cite{HKL, HKLS, KKLS}, we do not use this theory, relying on a geometric tool known as the \emph{boxing inequality} instead.

\section{Preliminaries}

In this section we introduce the necessary definitions and assumptions.

In this paper, $(X,d,\mu)$ is a complete metric space equipped
with a Borel regular outer measure $\mu$ satisfying a doubling property, that is,
there is a constant $C_d\ge 1$ such that
\[
0<\mu(B(x,2r))\leq C_d\,\mu(B(x,r))<\infty
\]
for every ball $B=B(x,r)$ with center $x\in X$ and radius $r>0$. We also assume that $X$ consists of at least two points.

In general, $C\ge 1$ will denote a constant whose particular value is not important for the purposes of this
paper, and might differ between
each occurrence. When we want to specify that a constant $C$
depends on the parameters $a,b, \ldots,$ we write $C=C(a,b,\ldots)$. Unless otherwise specified, all constants only 
depend on the doubling constant $C_d$ and the constants $C_P,\lambda$ associated
with the Poincar\'e inequality defined below.

A complete metric space with a doubling measure is proper,
that is, closed and bounded subsets are compact. Since $X$ is proper, for any open set $\Omega\subset X$
we define $\liploc(\Omega)$ to be the space of
functions that are Lipschitz in every $\Omega'\Subset\Omega$.
Here $\Omega'\Subset\Omega$ means that $\Omega'$ is open and that $\overline{\Omega'}$ is a
compact subset of $\Omega$. Other local spaces of functions are defined similarly.

For any set $A\subset X$ and $0<R<\infty$, the restricted spherical Hausdorff content
of codimension $1$ is defined by
\begin{equation}\label{eq:definition of Hausdorff content}
\mathcal{H}_{R}(A):=\inf\left\{ \sum_{i=1}^{\infty}
  \frac{\mu(B(x_{i},r_{i}))}{r_{i}}:\,A\subset\bigcup_{i=1}^{\infty}B(x_{i},r_{i}),\,r_{i}\le R\right\}.
\end{equation}
The codimension $1$ Hausdorff measure of a set $A\subset X$ is given by
\begin{equation*}
  \mathcal{H}(A):=\lim_{R\rightarrow 0^+}\mathcal{H}_{R}(A).
\end{equation*}

The measure theoretic boundary $\partial^{*}E$ of a set $E\subset X$ is the set of points $x\in X$
at which both $E$ and its complement have positive upper density, i.e.
\[
\limsup_{r\to 0^+}\frac{\mu(B(x,r)\cap E)}{\mu(B(x,r))}>0\quad\;
  \textrm{and}\quad\;\limsup_{r\to 0^+}\frac{\mu(B(x,r)\setminus E)}{\mu(B(x,r))}>0.
\]
The measure theoretic interior and exterior of $E$ are defined respectively by
\begin{equation}\label{eq:definition of measure theoretic interior}
I_E:=\left\{x\in X:\,\lim_{r\to 0^+}\frac{\mu(B(x,r)\setminus E)}{\mu(B(x,r))}=0\right\}
\end{equation}
and
\begin{equation}\label{eq:definition of measure theoretic exterior}
O_E:=\left\{x\in X:\,\lim_{r\to 0^+}\frac{\mu(B(x,r)\cap E)}{\mu(B(x,r))}=0\right\}.
\end{equation}
A curve is a rectifiable continuous mapping from a compact interval
into $X$.
A nonnegative Borel function $g$ on $X$ is an upper gradient 
of an extended real-valued function $u$
on $X$ if for all curves $\gamma$ on $X$, we have
\[
|u(x)-u(y)|\le \int_\gamma g\,ds,
\]
where $x$ and $y$ are the end points of $\gamma$. We interpret $|u(x)-u(y)|=\infty$ whenever  
at least one of $|u(x)|$, $|u(y)|$ is infinite. Upper gradients were originally introduced in~\cite{HK}.

For $1\le p<\infty$, we consider the following norm
\[
\Vert u\Vert_{N^{1,p}(X)}:=\Vert u\Vert_{L^p(X)}+\inf \Vert g\Vert_{L^p(X)},
\]
with the infimum taken over all upper gradients $g$ of $u$. 
The substitute for the Sobolev space $W^{1,p}(\R^n)$ in the metric setting is the Newton-Sobolev space
\[
N^{1,p}(X):=\{u:\|u\|_{N^{1,p}(X)}<\infty\}.
\]
For more on Newton-Sobolev spaces, we refer to~\cite{S, BB, HKST}.

Next we recall the definition and basic properties of functions
of bounded variation on metric spaces, see \cite{M}. See also e.g. \cite{AFP, EvaG92, Giu84, Zie89} for the classical 
theory in the Euclidean setting.
For $u\in L^1_{\loc}(X)$, we define the \emph{total variation} of $u$ on $X$ to be 
\[
\|Du\|(X):=\inf\left\{\liminf_{i\to\infty}\int_X g_{u_i}\,d\mu:\, u_i\in \Lip_{\loc}(X),\, u_i\to u\textrm{ in } L^1_{\loc}(X)\right\},
\]
where each $g_{u_i}$ is an upper gradient of $u_i$.
The total variation is clearly lower semicontinuous with respect to convergence in $L^1_{\loc}(X)$.
We say that a function $u\in L^1(X)$ is \emph{of bounded variation}, 
and denote $u\in\BV(X)$, if $\|Du\|(X)<\infty$.
By replacing $X$ with an open set $\Omega\subset X$ in the definition of the total variation, we can define $\|Du\|(\Omega)$.
A $\mu$-measurable set $E\subset X$ is said to be of \emph{finite perimeter} if $\|D\chi_E\|(X)<\infty$, where $\chi_E$ is the characteristic function of $E$.
The perimeter of $E$ in $\Omega$ is also denoted by
\[
P(E,\Omega):=\|D\chi_E\|(\Omega).
\]
For any Borel sets $E_1,E_2\subset X$ we have by \cite[Proposition 4.7]{M}
\begin{equation}\label{eq:Caccioppoli sets form an algebra}
P(E_1\cup E_2,X)\le P(E_1,X)+P(E_2,X). 
\end{equation}

We will assume throughout that $X$ supports a $(1,1)$-Poincar\'e inequality,
meaning that there exist constants $C_P>0$ and $\lambda \ge 1$ such that for every
ball $B(x,r)$, every locally integrable function $u$ on $X$,
and every upper gradient $g$ of $u$,
we have 
\[
\vint{B(x,r)}|u-u_{B(x,r)}|\, d\mu 
\le C_P r\vint{B(x,\lambda r)}g\,d\mu,
\]
where 
\[
u_{B(x,r)}:=\vint{B(x,r)}u\,d\mu :=\frac 1{\mu(B(x,r))}\int_{B(x,r)}u\,d\mu.
\]
By applying the Poincar\'e inequality to approximating locally Lipschitz functions in the definition of the total variation, 
we get the following $(1,1)$-Poincar\'e inequality for $\BV$ functions. There exists a constant $C$
such that for every ball $B(x,r)$ and every 
$u\in L^1_{\loc}(X)$, we have
\[
\vint{B(x,r)}|u-u_{B(x,r)}|\,d\mu
\le Cr\, \frac{\Vert Du\Vert (B(x,\lambda r))}{\mu(B(x,\lambda r))}.
\]
For $\mu$-measurable sets $E\subset X$, the above can be written as
\begin{equation}\label{eq:relative isoperimetric inequality}
\min\{\mu(B(x,r)\cap E),\,\mu(B(x,r)\setminus E)\}\le CrP(E,B(x,\lambda r)).
\end{equation}

For $1\le p<\infty$, the $p$-capacity of a set $A\subset X$ is given by
\begin{equation}\label{eq:definition of p-capacity}
 \capa_p(A):=\inf \Vert u\Vert_{N^{1,p}(X)},
\end{equation}
where the infimum is taken over all functions $u\in N^{1,p}(X)$ such that $u\ge 1$ in $A$.
If a property holds for all points outside a set of $p$-capacity zero, we say that it holds for $p$-quasi every point, or $p$-quasi\-everywhere.

The relative $p$-capacity of a set $A\subset X$ with respect to an open set $\Omega\subset X$ is given by
\[
\rcapa_p(A,\Omega):=\inf \int_{\Omega} g_u^p\,d\mu,
\]
where the infimum is taken over functions $u\in N^{1,p}(X)$ and upper gradients $g_u$ of $u$ such that $u\ge 1$ in $A$  and $u=0$ in $X\setminus \Omega$.
For basic properties satisfied by capacities, such as monotonicity and countable subadditivity, see e.g. \cite{BB}.
The $\BV$-capacity of a set $A\subset X$ is
\begin{equation}\label{eq:definition of BV capacity}
\capa_{\BV}(A):=\inf\Vert u\Vert_{\BV(X)},
\end{equation}
where the infimum is taken over functions $u\in\BV(X)$ that satisfy $u\ge 1$ in a neighborhood of $A$. Note that we understand $\BV$ functions to be $\mu$-equivalence classes, whereas we understand Newton-Sobolev functions to be defined everywhere (even though $\Vert \cdot\Vert_{N^{1,p}(X)}$ is then only a seminorm).

Given a set $E\subset X$ of finite perimeter, for $\mathcal H$-almost every $x\in \partial^*E$ we have
\begin{equation}\label{eq:definition of gamma}
\gamma \le \liminf_{r\to 0^+} \frac{\mu(E\cap B(x,r))}{\mu(B(x,r))} \le \limsup_{r\to 0^+} \frac{\mu(E\cap B(x,r))}{\mu(B(x,r))}\le 1-\gamma
\end{equation}
where $\gamma \in (0,1/2]$ only depends on the doubling constant and the constants in the Poincar\'e inequality, 
see~\cite[Theorem 5.4]{A1}.
For an open set $\Omega\subset X$ and a set $E\subset X$ of finite perimeter, we know that 
\begin{equation}\label{eq:def of theta}
P(E,\Omega)=\int_{\partial^{*}E\cap \Omega}\theta_E\,d\mathcal H,
\end{equation}
where
$\theta_E\colon X\to [\alpha,C_d]$ with $\alpha=\alpha(C_d,C_P,\lambda)>0$, see \cite[Theorem 5.3]{A1} 
and \cite[Theorem 4.6]{AMP}.

The jump set of $u\in\BV(X)$ is the set
\[
S_{u}:=\{x\in X:\, u^{\wedge}(x)<u^{\vee}(x)\},
\]
where $u^{\wedge}(x)$ and $u^{\vee}(x)$ are the lower and upper approximate limits of $u$ defined respectively by
\[
u^{\wedge}(x):
=\sup\left\{t\in\overline\R:\,\lim_{r\to 0^+}\frac{\mu(B(x,r)\cap\{u<t\})}{\mu(B(x,r))}=0\right\}
\]
and
\[
u^{\vee}(x):
=\inf\left\{t\in\overline\R:\,\lim_{r\to 0^+}\frac{\mu(B(x,r)\cap\{u>t\})}{\mu(B(x,r))}=0\right\}.
\]
In the Euclidean setting, results on the fine properties of $\BV$ functions can be formulated in terms of $u^{\wedge}$ and $u^{\vee}$, but in the metric setting, we need to consider a larger number of jump values. The reason for this is explained in Example \ref{ex:one dimensional space}.
Thus we define the functions $u^l$, $l=1,\ldots,n:=\lfloor 1/\gamma\rfloor$, as 
follows: $u^1:=u^{\wedge}$, $u^n:=u^{\vee}$, 
and for $l=2,\ldots,n-1$ we define inductively
\begin{equation}\label{eq:definition of n jump values}
u^{l}(x):=\sup\left\{t\in\overline{\R}:\,\lim_{r\to 0^+}\frac{\mu(B(x,r)\cap \{u^{l-1}(x)+\delta<u<t\})}{\mu(B(x,r))}=0\ \ \forall\, \delta>0\right\}
\end{equation}
provided $u^{l-1}(x)<u^\vee(x)$, and otherwise we set 
$u^l(x)=u^{\vee}(x)$. 
It can be shown that each $u^l$ is a Borel function,
and $u^{\wedge}=u^1\le \ldots \le u^n = u^{\vee}$.

We have the following notion of quasicontinuity for $\BV$ functions.

\begin{theorem}[{\cite[Theorem~1.1]{LaSh}}]\label{thm:quasicontinuity}
Let $u\in\BV(X)$ and let $\eps>0$. Then there exists an open set $G\subset X$ with $\capa_1(G)<\eps$ such that if $y_k\to x$ 
with $y_k,x\in X\setminus G$, then 
\[
\min_{l_2\in \{1,\ldots,n\}} |u^{l_1}(y_k)-u^{l_2}(x)|\to 0
\]
for each $l_1=1,\ldots,n$.
\end{theorem}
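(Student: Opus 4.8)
My plan is to derive the theorem from a version with a single exceptional set: I will produce a Borel set $N\subset X$ with $\capa_1(N)=0$ such that the displayed multi-limit convergence holds whenever $y_k\to x$ with $y_k,x\in X\setminus N$. This suffices, because the property clearly persists when $X\setminus N$ is replaced by the smaller set $X\setminus G$ for any $G\supseteq N$, and, by the outer regularity of the $1$-capacity, a set of zero $1$-capacity is contained in open sets of arbitrarily small $1$-capacity. To construct $N$ I would first reduce to the case $0\le u\le 1$: truncation $u\mapsto\min\{\max\{u,-M\},M\}$ truncates each $u^l$ by the same amount, and after splitting into positive and negative parts and rescaling one obtains the bounded case; the union of the resulting exceptional sets, together with the $\mathcal H$-null set on which $u^\wedge=-\infty$ or $u^\vee=+\infty$, then handles the original $u$.

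The heart of the argument is the coarea formula $\|Du\|(X)=\int_0^1 P(E_t,X)\,dt$, where $E_t:=\{u>t\}$, so that $E_t$ has finite perimeter for a.e.\ $t\in(0,1)$. The structural point I would exploit is that the functions $u^l$ are completely determined by the densities of the superlevel sets at a point: $u^\vee(x)=\inf\{t: x\in O_{E_t}\}$ and $u^\wedge(x)=\sup\{t: x\in I_{E_t}\}$, while an intermediate value $u^l(x)$ records the top of a plateau of levels $t$ on which $E_t$ has one and the same density in $(0,1)$ at $x$; the bound $n=\lfloor 1/\gamma\rfloor$ on the number of such values is exactly \eqref{eq:definition of gamma}, which pins the density at a measure-theoretic boundary point into $[\gamma,1-\gamma]$. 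Fixing a countable dense set of admissible levels $\{t_j\}\subset(0,1)$, I would let $N$ be the union of the $\mathcal H$-null sets on which \eqref{eq:definition of gamma} fails for the $E_{t_j}$ (using \eqref{eq:def of theta} to pass freely between perimeter and $\mathcal H$) together with a further $\mathcal H$-null set, furnished by a boxing-type capacity estimate for finite-perimeter sets, outside of which $I_{E_{t_j}}$ and $O_{E_{t_j}}$ are stable: a point of $I_{E_{t_j}}\setminus N$ (resp.\ $O_{E_{t_j}}\setminus N$) is not a limit of points at which $X\setminus E_{t_j}$ (resp.\ $E_{t_j}$) has positive density on every scale.

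On $X\setminus N$ I would argue by contradiction. Given $y_k\to x$ with $y_k,x\in X\setminus N$ and a fixed $l_1$, suppose that along a subsequence $u^{l_1}(y_k)\to a$ with $\dist\bigl(a,\{u^1(x),\dots,u^n(x)\}\bigr)=\rho>0$. Since $a$ lies in no plateau at $x$, there is an admissible level $t_j$ near $a$ with $x\in I_{E_{t_j}}\cup O_{E_{t_j}}$; but by the definition of $u^{l_1}$, the balls $B(y_k,r)$ carry a fixed proportion of the set on which $u$ is close to $a$ for all small $r$, so, since $B(y_k,r)\subset B(x,2r)$ once $y_k$ is near $x$, the balls $B(x,2r)$ do as well, and passing to a superlevel or sublevel set that separates $a$ from the density of $E_{t_j}$ at $x$ contradicts the stability of $E_{t_j}$ at $x$ built into $N$. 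Equivalently, the set-valued map $x\mapsto\{u^1(x),\dots,u^n(x)\}$ is upper hemicontinuous on $X\setminus N$, which is the assertion. I expect the main obstacle to be precisely this last step: the limit value $a$ must be matched with the right one of the $n$ functions $u^l$, and since $\gamma$ may be strictly smaller than $1/2$ (Example~\ref{ex:one dimensional space}) one cannot argue with $u^\wedge$ and $u^\vee$ alone, but must track the whole chain $u^1\le\dots\le u^n$ and the plateaux of intermediate density defining it; organizing this bookkeeping, and establishing the boxing-type stability estimate entering the definition of $N$, is where the real work lies.
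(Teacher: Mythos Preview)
This theorem is not proved in the present paper: it is quoted from \cite{LaSh} and used as a black box, so there is no proof here to compare your proposal against. Nonetheless, your plan contains a genuine gap that is worth flagging.

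Your very first reduction is false. You propose to find a \emph{single} Borel set $N$ with $\capa_1(N)=0$ such that the multi-limit convergence holds for all $y_k\to x$ with $y_k,x\in X\setminus N$. The enlarged-rationals example at the end of the paper shows that no such $N$ exists in general. There $X=\R^2$, $E=\bigcup_i B(q_i,2^{-i})$, and $u=\chi_E$; the set $E$ is open and dense with $\mathcal L^2(E)\le\pi$, so $O_E$ has positive Lebesgue measure. Any set $N$ with $\capa_1(N)=0$ has $\mathcal H(N)=0$ and hence $\mathcal L^2(N)=0$, so one can pick $x\in O_E\setminus N$; since $E\setminus N$ still has full measure in the open dense set $E$, it is dense in $\R^2$, and one finds $y_k\in E\setminus N\subset I_E$ with $y_k\to x$. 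But then $u^\wedge(y_k)=u^\vee(y_k)=1$ while $u^\wedge(x)=u^\vee(x)=0$, so the minimum in the displayed expression equals $1$ for every $k$. Thus no $\capa_1$-null exceptional set can work, and the quasicontinuity statement genuinely requires, for each $\eps>0$, an open $G$ of small but \emph{positive} capacity that depends on $\eps$. Your boxing-type ``stability'' set for the superlevel sets $E_{t_j}$ cannot be $\mathcal H$-null for this same reason: you need to remove a set that sees the perimeters $P(E_{t_j},\cdot)$ quantitatively, not just an $\mathcal H$-null set, and the capacity of that set is controlled by $\|Du\|(X)$ rather than being zero.
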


\begin{remark}\label{rmk:capacities and Hausdorff contents}
The $1$-capacity and Hausdorff contents are closely related:
it follows from~\cite[Theorem~4.3, Theorem~5.1]{HaKi} that
$\capa_1(A)=0$ if and only if $\mathcal{H}(A)=0$. 
Moreover, from \cite[Lemma 3.4]{KKST3} it follows that $\capa_1(A)\le 2C_d\mathcal H_1(A)$ for any set $A\subset X$.
On the other hand,
by combining~\cite[Theorem~4.3]{HaKi} and the proof of~\cite[Theorem 5.1]{HaKi}, 
we obtain that
\[
\mathcal H_{\eps}(A)\le C(C_d,C_P,\lambda,\eps)\capa_1(A)
\]
for 
any $A\subset X$ and $\eps>0$. Thus we can also control the size of the ``exceptional set'' $G$ in Theorem~\ref{thm:quasicontinuity} 
and elsewhere by its $\mathcal H_{\eps}$-measure, for arbitrarily small $\eps>0$.
\end{remark}

\section{Rigidity results for the $1$-capacity}

In order to prove our main result, Theorem \ref{thm:fine continuity}, we need to be able to modify the ``exceptional set'' $G$ of Theorem~\ref{thm:quasicontinuity} in a suitable way. In this section we show that sets can be enlarged in two different ways without increasing the $1$-capacity significantly.

It is known that $\capa_1$ is an \emph{outer capacity}, meaning that
\[
\capa_1(G)=\inf\{\capa_1(U):\,U\textrm{ is open and }U\supset G \}
\]
for any $G\subset X$,
see e.g. \cite[Theorem 5.31]{BB}. The following result is in the same spirit as this fact.

\begin{lemma}\label{lem:G has finite perimeter}
For any $G\subset X$, we can find an open set $U\supset G$ with $\capa_1(U)\le C\capa_1(G)$ and $P(U,X)\le C\capa_1(G)$.
\end{lemma}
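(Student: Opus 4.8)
\noindent\emph{Proof plan.}
The plan is to reduce to the case where $G$ is open, to squeeze $G$ inside a set of finite perimeter, and then to ``open up'' that set by means of the boxing inequality. We may assume $\capa_1(G)<\infty$ (otherwise $U=X$ works, since $P(X,X)=0$) and $\capa_1(G)>0$. Because $\capa_1$ is an outer capacity, there is an open set $G'\supset G$ with $\capa_1(G')\le 2\capa_1(G)$; proving the statement for $G'$ in place of $G$ then gives it for $G$, at the cost of a factor $2$ in the constant. Hence I may assume that $G$ itself is open.

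Assume then that $G$ is open. First I would fix $u\in N^{1,1}(X)$ with $u\ge 1$ on $G$, together with an upper gradient $g_u$ such that $\|u\|_{L^1(X)}+\|g_u\|_{L^1(X)}\le 2\capa_1(G)$, and replace $u$ by $\min\{\max\{u,0\},1\}$, so that $0\le u\le 1$, $u=1$ on $G$, and $g_u$ is still an upper gradient. Using $N^{1,1}(X)\subset\BV(X)$, the coarea formula and Cavalieri's principle,
\[
\int_{1/4}^{1/2}\bigl(P(\{u>t\},X)+\mu(\{u>t\})\bigr)\,dt\ \le\ \|Du\|(X)+\int_X u\,d\mu\ \le\ 2\capa_1(G),
\]
so I can fix $t_0\in(1/4,1/2)$ for which $E:=\{u>t_0\}$ satisfies $P(E,X)+\mu(E)\le 8\capa_1(G)$; in particular $E$ has finite perimeter and finite measure. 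Since $G$ is open and $u=1>t_0$ on $G$, every point of $G$ has density $1$ in $E$, i.e.\ $G\subset I_E$.

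Next I would feed $E$ into the boxing inequality: since $E$ has finite perimeter and finite measure, its measure theoretic interior $I_E$ is covered by countably many balls $B(x_i,r_i)$ with $r_i\le 1$ and $\sum_i\mu(B(x_i,r_i))/r_i\le C(P(E,X)+\mu(E))\le C\capa_1(G)$. For each $i$, applying the coarea formula to the Lipschitz cutoff $y\mapsto\max\{0,\min\{1,(2r_i-d(y,x_i))/r_i\}\}$ produces a radius $\rho_i\in(r_i,2r_i)$ with $P(B(x_i,\rho_i),X)\le C_d\,\mu(B(x_i,r_i))/r_i$. Put $U:=\bigcup_i B(x_i,\rho_i)$; this is open and contains $\bigcup_i B(x_i,r_i)\supset I_E\supset G$. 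Since $\sum_i\mu(B(x_i,r_i))<\infty$ (here $r_i\le 1$ is used), the characteristic functions of the partial unions converge in $L^1(X)$, so by lower semicontinuity of the total variation and countable subadditivity of the perimeter (cf.\ \eqref{eq:Caccioppoli sets form an algebra}),
\[
P(U,X)\ \le\ \sum_i P(B(x_i,\rho_i),X)\ \le\ C_d\sum_i\frac{\mu(B(x_i,r_i))}{r_i}\ \le\ C\capa_1(G).
\]

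It remains to bound $\capa_1(U)$. For each $i$, let $\psi_i$ be the Lipschitz function equal to $1$ on $B(x_i,\rho_i)$, vanishing outside $B(x_i,2\rho_i)$, with upper gradient $g_{\psi_i}:=\rho_i^{-1}\chi_{B(x_i,2\rho_i)}$; using $r_i\le 1$, $r_i<\rho_i<2r_i$ and the doubling property, $\|\psi_i\|_{L^1(X)}+\|g_{\psi_i}\|_{L^1(X)}\le C\mu(B(x_i,r_i))/r_i$. Then $\psi:=\sup_i\psi_i$ satisfies $0\le\psi\le 1$, $\psi\ge 1$ on $U$, and $\sum_i g_{\psi_i}$ is an upper gradient of $\psi$; hence $\psi\in N^{1,1}(X)$ and
\[
\capa_1(U)\ \le\ \|\psi\|_{N^{1,1}(X)}\ \le\ \sum_i\bigl(\|\psi_i\|_{L^1(X)}+\|g_{\psi_i}\|_{L^1(X)}\bigr)\ \le\ C\sum_i\frac{\mu(B(x_i,r_i))}{r_i}\ \le\ C\capa_1(G).
\]
Together with the perimeter bound and the opening reduction this gives the lemma, with $C=C(C_d,C_P,\lambda)$. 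The step I expect to require the most care is the appeal to the boxing inequality: one needs the version that covers the measure theoretic interior $I_E$ (not merely $E$ up to an $\mathcal H$-negligible set) by balls of radius at most a fixed constant, and it is exactly this radius bound that makes the $L^1$-parts of the competitors $\psi_i$ summable. Everything else is routine truncation, coarea and Lipschitz-cutoff bookkeeping.
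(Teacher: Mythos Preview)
Your proposal is correct but follows a genuinely different route from the paper. The paper's proof is much shorter: it directly invokes the bound $\mathcal H_{1/2}(G)\le C\capa_1(G)$ from Remark~\ref{rmk:capacities and Hausdorff contents} (which packages results of Hakkarainen--Kinnunen), takes a covering of $G$ by balls $B(x_i,r_i)$ with $r_i\le 1/2$ realizing this Hausdorff content, enlarges each ball slightly via \cite[Lemma 6.2]{KKST} to one of controlled perimeter, and sets $U=\bigcup_i B(x_i,\tilde r_i)$. The perimeter bound for $U$ follows as in your argument, while $\capa_1(U)$ is bounded in one line by noting $\mathcal H_1(U)\le C\capa_1(G)$ and using the reverse inequality from Remark~\ref{rmk:capacities and Hausdorff contents}. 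No reduction to $G$ open, no test function, no coarea, no boxing inequality is needed. Your approach instead rebuilds that ball covering from first principles: choosing a near-optimal $u\in N^{1,1}(X)$, extracting by coarea a superlevel set $E$ of finite perimeter with $G\subset I_E$, and then invoking the global boxing inequality to cover $I_E$. This is more self-contained in that it avoids citing the capacity/Hausdorff-content comparison, but it trades that dependence for a dependence on a global boxing inequality which the paper does not prove (only the local Lemma~\ref{lem:boxing inequality} appears later). Your explicit cutoff construction for $\capa_1(U)$ is fine but more laborious than the paper's one-line Hausdorff-content estimate. The ball-enlargement step and the lower-semicontinuity/subadditivity argument for $P(U,X)$ are essentially identical in the two proofs.
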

\begin{proof}
We can assume that $\capa_1(G)<\infty$. According to Remark~\ref{rmk:capacities and Hausdorff contents}, we have $\mathcal H_{1/2}(G)\le C\capa_1(G)$. Take a covering $\{B(x_i,r_i)\}_{i\in\N}$ of the set $G$ with $r_i\le 1/2$ and
\[
\sum_{i\in\N}\frac{\mu(B(x_i,r_i))}{r_i}\le C\capa_1(G).
\]
By \cite[Lemma 6.2]{KKST}, for each $i\in\N$ there exists a radius $\widetilde{r}_i\in [r_i,2r_i]$ such that
\[
P(B(x_i,\widetilde{r}_i),X)\le C_d\frac{\mu(B(x_i,\widetilde{r}_i))}{\widetilde{r}_i}.
\]
By using the lower semicontinuity and subadditivity of perimeter, recall \eqref{eq:Caccioppoli sets form an algebra}, we get
\begin{align*}
P\left(\bigcup_{i\in\N}B(x_i,\widetilde{r}_i),X\right)
&\le \sum_{i\in\N}P(B(x_i,\widetilde{r}_i),X)\\
&\le C_d\sum_{i\in\N}\frac{\mu(B(x_i,\widetilde{r}_i))}{\widetilde{r}_i}
\le C\capa_1(G).
\end{align*}
So we can define $U:=\bigcup_{i\in\N}B(x_i,\widetilde{r}_i)$, with $U\supset G$, and then $P(U,X)\le C\capa_1(G)$ and $\mathcal H_{1}(U)\le C\capa_1(G)$, so that also $\capa_1(U)\le C\capa_1(G)$ by Remark \ref{rmk:capacities and Hausdorff contents}.
\end{proof}

In proving our second rigidity result, we will use \emph{discrete convolutions} of $\BV$ functions.
By the doubling property of the measure $\mu$, given any scale $R>0$ we can pick a covering of the space $X$ by balls $B(x_j,R)$, such that suitable dilated balls, say $B(x_j,10\lambda R)$, have bounded overlap. More precisely, each $B(x_k,10\lambda R)$ meets at most $C$ balls $B(x_j,10\lambda R)$.
Given such a covering, 
we can take a partition of unity $\{\phi_j\}_{j=1}^{\infty}$ subordinate to it, such that $0\le \phi_j\le 1$, each 
$\phi_j$ is a $C/R$-Lipschitz function, and $\supp(\phi_j)\subset B(x_j,2 R)$ for each 
$j\in\N$ (see e.g. \cite[Theorem 3.4]{BBS07}). Finally, we can define a discrete convolution $v$ of 
any $u\in \BV(X)$ with respect to the covering by
\[
v:=\sum_{j=1}^{\infty}u_{B(x_j,5R)}\phi_j.
\]
We know that $v$ has an upper gradient
\[
g=C\sum_{j=1}^{\infty}\chi_{B_j}\frac{\Vert Du\Vert(B(x_j,10\lambda R))}{\mu(B(x_j,R))},
\]
see e.g. the proof of \cite[Proposition 4.1]{KKST2}, and so by the bounded overlap of the balls $B(x_j,10\lambda R)$, we have $\Vert g\Vert_{L^1(X)}\le C\Vert Du\Vert(X)$. We also have $\Vert v\Vert_{\BV(X)}\le \Vert v\Vert_{N^{1,1}(X)}$ (since Lipschitz functions are dense in the class $N^{1,1}(X)$, see e.g. \cite[Theorem 5.1]{BB}), and thus
\begin{equation}\label{eq:estimate for BV norm of discrete convolution}
\Vert v\Vert_{\BV(X)}\le C\Vert u\Vert_{\BV(X)}.
\end{equation}
If $u\in\BV(X)$ and each $v_i$, $i\in\N$, is a discrete convolution of $u$ at scale $1/i$, we know that for some $\widetilde{\gamma}=\widetilde{\gamma}(C_d,C_P,\lambda)\in (0,1/2]$,
\begin{equation}\label{eq:pointwise convergence}
\begin{split}
&(1-\widetilde{\gamma}) u^{\wedge}(y)+\widetilde{\gamma} u^{\vee}(y) \le \liminf_{i\to\infty}v_i(y)\\
&\qquad \le \limsup_{i\to\infty}v_i(y)\le \widetilde{\gamma} u^{\wedge}(y)+(1-\widetilde{\gamma}) u^{\vee}(y)
\end{split}
\end{equation}
for $\mathcal H$-almost every $y\in X$, see \cite[Proposition 4.1]{KKST2}.

Recall the definitions of the $1$-capacity and the $\BV$-capacity from \eqref{eq:definition of p-capacity} and \eqref{eq:definition of BV capacity}. By \cite[Theorem 4.3]{HaKi} we know that
\begin{equation}\label{eq:comparability of capacities}
\capa_{\BV}(A)\le \capa_1(A)\le C\capa_{\BV}(A)
\end{equation}
for any $A\subset X$.

Now we prove the following rigidity result for the $1$-capacity. Recall from \eqref{eq:definition of measure theoretic interior} the definition of the measure theoretic interior $I_G$ of a set $G$.

\begin{proposition}\label{prop:capacity and adding points to set}
Let $G\subset X$ be an arbitrary set.
Then
\[
\capa_1(G\cup I_G\cup\partial^*G)\le C\capa_1(G).
\]
\end{proposition}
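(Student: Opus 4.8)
The plan is to reduce the arbitrary set $G$ to an open set of finite perimeter via Lemma~\ref{lem:G has finite perimeter}, and then invoke the boxing inequality. We may assume $\capa_1(G)<\infty$. By Lemma~\ref{lem:G has finite perimeter} we obtain an open set $U\supset G$ with $\capa_1(U)\le C\capa_1(G)$ and $P(U,X)\le C\capa_1(G)$; moreover, since any $u\in N^{1,1}(X)$ with $u\ge 1$ in $U$ satisfies $\|u\|_{N^{1,1}(X)}\ge\|u\|_{L^1(X)}\ge\mu(U)$, we get $\mu(U)\le\capa_1(U)\le C\capa_1(G)<\infty$. Next I would check the inclusion $G\cup I_G\cup\partial^*G\subset I_U\cup\partial^*U$: since $U$ is open, $G\subset U\subset I_U$; if $x\in I_G$ then $X\setminus U\subset X\setminus G$ forces $x\in I_U$; and if $x\in\partial^*G$ then $\limsup_{r\to0^+}\mu(B(x,r)\cap U)/\mu(B(x,r))\ge\limsup_{r\to0^+}\mu(B(x,r)\cap G)/\mu(B(x,r))>0$, so $x\notin O_U$, i.e.\ $x\in I_U\cup\partial^*U$. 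By monotonicity of $\capa_1$ it therefore suffices to bound $\capa_1(I_U\cup\partial^*U)$.

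For the jump part I would argue without the boxing inequality: by \eqref{eq:def of theta} and $\theta_U\ge\alpha$ we have $\mathcal H(\partial^*U)\le\alpha^{-1}P(U,X)$, and since $\mathcal H_1\le\mathcal H$, Remark~\ref{rmk:capacities and Hausdorff contents} gives $\capa_1(\partial^*U)\le 2C_d\,\mathcal H_1(\partial^*U)\le 2C_d\alpha^{-1}P(U,X)\le C\capa_1(G)$. For the interior part $I_U$ the boxing inequality is the essential tool: applied to $U$, a set of finite perimeter and finite measure, at the fixed scale $R=1$, it produces a covering of $I_U$ (up to an $\mathcal H$-negligible set, which is $\capa_1$-negligible by Remark~\ref{rmk:capacities and Hausdorff contents}) by balls of radius at most $1$ whose total $\mathcal H_1$-contribution is at most $C(P(U,X)+\mu(U))$; thus $\mathcal H_1(I_U)\le C(P(U,X)+\mu(U))\le C\capa_1(G)$, and Remark~\ref{rmk:capacities and Hausdorff contents} yields $\capa_1(I_U)\le 2C_d\,\mathcal H_1(I_U)\le C\capa_1(G)$. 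Combining the two parts by the subadditivity of $\capa_1$,
\[
\capa_1(G\cup I_G\cup\partial^*G)\le\capa_1(I_U\cup\partial^*U)\le\capa_1(I_U)+\capa_1(\partial^*U)\le C\capa_1(G).
\]

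I expect the only real difficulty to be pinning down the boxing inequality in precisely the form needed: an $\mathcal H_1$-bound (equivalently, by Remark~\ref{rmk:capacities and Hausdorff contents}, a $1$-capacity bound) for the measure theoretic interior $I_U$ in terms of $P(U,X)$ and $\mu(U)$, valid for an arbitrary set of finite perimeter. Everything else is bookkeeping: the passage from the arbitrary (possibly non-measurable) set $G$ to the open, finite-perimeter set $U$ via Lemma~\ref{lem:G has finite perimeter} is exactly what makes the boxing inequality applicable, and the set $\partial^*U$ is harmless since it already has finite codimension-$1$ Hausdorff measure by \eqref{eq:def of theta}.
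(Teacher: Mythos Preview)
Your argument is correct, but it takes a different route from the paper's own proof. The paper does \emph{not} reduce to an open set of finite perimeter and does not invoke the boxing inequality here at all. Instead it works directly with the arbitrary set $G$: using the comparability \eqref{eq:comparability of capacities} it passes to $\capa_{\BV}$, picks a near-optimal test function $u\in\BV(X)$ with $u\ge 1$ in a neighborhood of $G$, forms discrete convolutions $v_i$, and uses the pointwise convergence \eqref{eq:pointwise convergence} together with the elementary observation that $u^{\wedge}\ge 1$ on $G\cup I_G$ and $u^{\vee}\ge 1$ on $\partial^*G$ to conclude that $\liminf_i v_i\ge\widetilde{\gamma}$ there (off an $\mathcal H$-null set). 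Continuity of $\capa_{\BV}$ along increasing sequences then finishes the estimate.

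Your approach is more geometric: after the reduction via Lemma~\ref{lem:G has finite perimeter}, the inclusion $G\cup I_G\cup\partial^*G\subset I_U\cup\partial^*U$ is clean, the bound on $\partial^*U$ via \eqref{eq:def of theta} is immediate, and the boxing inequality handles $I_U$. The trade-off is exactly the one you identify: you need a global boxing inequality giving $\mathcal H_1(I_U)\le C(P(U,X)+\mu(U))$, which is available in the literature (e.g.\ \cite{KKST3}) but is not proved in this paper --- the paper only states a \emph{local} version, Lemma~\ref{lem:boxing inequality}, and that comes later. The paper's discrete-convolution argument, by contrast, is self-contained and avoids the boxing machinery entirely at this stage, reserving it for Proposition~\ref{prop:capacity of fine closure}. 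Either route is valid; yours is arguably more transparent once the global boxing inequality is in hand.
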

\begin{proof}
By \eqref{eq:comparability of capacities} it is enough to prove this for $\capa_{\BV}$ instead of $\capa_1$. We can assume that $\capa_{\BV}(G)<\infty$. Fix $\eps>0$ and choose $u\in \BV(X)$ with $u\ge 0$, $u\ge 1$ in a neighborhood of $G$, and $\Vert u\Vert_{\BV(X)}\le \capa_{\BV}(G)+\eps$. Let each $v_i\in \liploc(X)$, $i\in\N$, be a discrete convolution of $u$ at scale $1/i$, and let $N\subset X$ be the set where \eqref{eq:pointwise convergence} fails, so that $\mathcal H(N)=0$. Thus we have (recall Remark \ref{rmk:capacities and Hausdorff contents})
\[
\capa_{\BV}(G\cup I_G\cup\partial^*G\setminus N)=\capa_{\BV}(G\cup I_G\cup\partial^*G).
\]
Clearly $u^{\wedge}\ge 1$ in $G\cup I_G$, and $u^{\vee}\ge 1$ in $\partial^*G$, so that
\[
(1-\widetilde{\gamma}) u^{\wedge}(y)+\widetilde{\gamma} u^{\vee}(y)\ge \widetilde{\gamma}
\]
for every $y\in G\cup I_G\cup \partial^*G$, and so by \eqref{eq:pointwise convergence},
\[
\liminf_{i\to\infty}v_i(y)\ge \widetilde{\gamma}\qquad\textrm{for every}\ \ y\in G\cup I_G\cup \partial^*G\setminus N.
\]
Define the sets
\[
G_i:=\{x\in G\cup I_G\cup \partial^*G\setminus N:\, v_j(x)>\widetilde{\gamma}/2\ \ \textrm{for all }j\ge i\},\quad i\in\N.
\]
Now we have $G_1\subset G_2\subset \ldots$ and $\bigcup_{i\in\N}G_i=G\cup I_G\cup \partial^*G\setminus N$. Since discrete convolutions are continuous, clearly $v_i>\widetilde{\gamma}/2$ in a neighborhood of $G_i$ and so we can use $2v_i/\widetilde{\gamma}$ to estimate the $\BV$-capacity of $G_i$. Furthermore, by \cite[Theorem~3.4]{HaKi} we know that the $\BV$-capacity is continuous with respect to increasing sequences of sets, and so we get
\begin{align*}
\capa_{\BV}(G\cup I_G\cup \partial^*G)
&=\capa_{\BV}(G\cup I_G\cup \partial^*G\setminus N)
=\capa_{\BV}\left(\bigcup_{i\in\N}G_i\right)\\
&=\lim_{i\to\infty}\capa_{\BV}(G_i)
\le \frac{2}{\widetilde{\gamma}}\liminf_{i\to\infty}\Vert v_i\Vert_{\BV(X)}\\
&\overset{\eqref{eq:estimate for BV norm of discrete convolution}}{\le} C\Vert u\Vert_{\BV(X)}
\le C(\capa_{\BV}(G)+\eps)
\end{align*}
by the choice of $u$.
By letting $\eps\to 0$, we get the result.
\end{proof}

\section{The $1$-fine topology}

Our result on $1$-fine continuity will be based on a concept of a fine topology on the space.
Let us first consider some background concerning the case $1<p<\infty$. The following definitions and facts are given in \cite{JaBj} and \cite[Section 11.6]{BB}. A set $A\subset X$ is $p$-thin at $x\in X$ if
\[
\int_0^1\left(\frac{\rcapa_p(A\cap B(x,r),B(x,2r))}{\rcapa_p(B(x,r),B(x,2r))}\right)^{1/(p-1)}\,\frac{dr}{r}<\infty.
\]
A set $U\subset X$ is $p$-finely open if $X\setminus U$ is $p$-thin at every $x\in U$.
The collection of $p$-finely open sets is a topology on $X$, called the $p$-fine topology.
Let $\overline{G}^p$ be the $p$-fine closure of $G\subset X$ (smallest $p$-finely closed set containing $G$).
For an open set $\Omega\subset X$ with $\capa_p(X\setminus\Omega)>0$ and $G\Subset \Omega$, we have
\[
\rcapa_p(\overline{G}^p,\Omega)=\rcapa_p(G,\Omega).
\]
A $p$-finely closed set is measure theoretically closed, as follows from \cite[Corollary 11.25]{BB}, and thus the measure theoretic closure $G\cup I_G\cup \partial^*G$ is a subset of $\overline{G}^p$. Thus in the case $p>1$, a stronger result than Proposition \ref{prop:capacity and adding points to set} holds.

In a similar vein, according to \cite[Definition 1.1]{BMS} (which is based on \cite{Lew}) a set $A\subset X$ is said to be $p$-fat at a point $x\in X$ if
\[
\limsup_{r\to 0^+}\frac{\rcapa_p(A\cap B(x,r),B(x,2r))}{\rcapa_p(B(x,r),B(x,2r))}>0.
\]
By \cite[Proposition 6.16]{BB} we know that for small $r>0$ and $1\le p<\infty$, $\rcapa_p(B(x,r),B(x,2r))$ is comparable to $\mu(B(x,r))/r^p$. This motivates the following definition.

\begin{definition}
We say that $A\subset X$ is $1$-thin at the point $x\in X$ if
\[
\lim_{r\to 0^+}r\frac{\rcapa_1(A\cap B(x,r),B(x,2r))}{\mu(B(x,r))}=0.
\]
We also say that a set $U\subset X$ is $1$-finely open if $X\setminus U$ is $1$-thin at every $x\in U$.
\end{definition}


\begin{lemma}
The collection of $1$-finely open sets is a topology on $X$ (called the $1$-fine topology).
\end{lemma}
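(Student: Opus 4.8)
The plan is to verify the three axioms defining a topology directly from the definition of $1$-thinness, using only two elementary properties of the relative $1$-capacity $\rcapa_1(\cdot,\Om)$, both standard (see e.g. \cite{BB}): \emph{monotonicity}, i.e. $\rcapa_1(A,\Om)\le\rcapa_1(A',\Om)$ whenever $A\subset A'$, and \emph{finite subadditivity}, i.e. $\rcapa_1(A_1\cup A_2,\Om)\le\rcapa_1(A_1,\Om)+\rcapa_1(A_2,\Om)$. The latter can be obtained by taking $u=\max\{u_1,u_2\}$ for admissible competitors $u_1,u_2$ of $(A_1,\Om)$ and $(A_2,\Om)$ with upper gradients $g_1,g_2$: then $u\ge 1$ on $A_1\cup A_2$, $u=0$ on $X\setminus\Om$, and $\max\{g_1,g_2\}\le g_1+g_2$ is an upper gradient of $u$, so passing to infima gives the claim.

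The empty set is $1$-finely open vacuously, and $X$ is $1$-finely open because $X\setminus X=\emptyset$, so that $\rcapa_1(\emptyset\cap B(x,r),B(x,2r))=0$ for all $r$ and hence $\emptyset$ is $1$-thin at every point of $X$. For an arbitrary union $U=\bigcup_{\alpha}U_\alpha$ of $1$-finely open sets and a point $x\in U$, choose $\beta$ with $x\in U_\beta$; then $X\setminus U\subset X\setminus U_\beta$, so by monotonicity and nonnegativity of $\rcapa_1$,
\[
0\le r\frac{\rcapa_1((X\setminus U)\cap B(x,r),B(x,2r))}{\mu(B(x,r))}\le r\frac{\rcapa_1((X\setminus U_\beta)\cap B(x,r),B(x,2r))}{\mu(B(x,r))},
\]
and the right-hand side tends to $0$ as $r\to 0^+$ since $X\setminus U_\beta$ is $1$-thin at $x$; a squeeze gives that $X\setminus U$ is $1$-thin at $x$. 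As $x\in U$ was arbitrary, $U$ is $1$-finely open.

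For finite intersections it suffices to treat two sets $U_1,U_2$. Given $x\in U_1\cap U_2$, write $X\setminus(U_1\cap U_2)=(X\setminus U_1)\cup(X\setminus U_2)$ and intersect with $B(x,r)$; finite subadditivity of $\rcapa_1$ then bounds $r\,\rcapa_1((X\setminus(U_1\cap U_2))\cap B(x,r),B(x,2r))/\mu(B(x,r))$ by the sum of the two corresponding quantities for $X\setminus U_1$ and $X\setminus U_2$, each of which tends to $0$ as $r\to 0^+$ by their $1$-thinness at $x$; hence so does the left-hand side, and $U_1\cap U_2$ is $1$-finely open. Induction extends this to all finite intersections. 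I expect no genuine obstacle here: the only points needing a little care are invoking nonnegativity together with a squeeze to conclude that the relevant limits are exactly $0$, and observing that it is precisely the \emph{finite} subadditivity of $\rcapa_1$ that matches the restriction to finite intersections in the topology axioms.
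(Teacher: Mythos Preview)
Your proof is correct and follows essentially the same route as the paper's: monotonicity of $\rcapa_1$ for arbitrary unions, and (finite) subadditivity of $\rcapa_1$ for finite intersections. Your version is slightly more explicit in checking $\emptyset$ and $X$ and in invoking a squeeze with nonnegativity to conclude that the actual limit (not just the $\limsup$) is $0$, but the argument is otherwise the same.
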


\begin{proof}
Let $\{U_i\}_{i\in I}$ be any collection of $1$-finely open sets, and let $x\in \bigcup_{i\in I}U_i$. Then $x\in U_j$ for some $j\in I$. Thus
\begin{align*}
&\limsup_{r\to 0^+}r\frac{\rcapa_1\left(B(x,r)\setminus \bigcup_{i\in I}U_i,B(x,2r)\right)}{\mu(B(x,r))}\\
&\qquad\quad\le 
\limsup_{r\to 0^+}r\frac{\rcapa_1(B(x,r)\setminus U_j,B(x,2r))}{\mu(B(x,r))}=0
\end{align*}
by the fact that $U_j$ is $1$-finely open. Thus $\bigcup_{i\in I}U_i$ is a $1$-finely open set.
Next let $U_1,\ldots,U_k$ be $1$-finely open sets, with $k\in\N$, and suppose $x\in\bigcap_{i=1}^k U_i$. Then by the subadditivity of capacity
\begin{align*}
&\limsup_{r\to 0^+}r\frac{\rcapa_1 \left(B(x,r)\setminus \bigcap_{i=1}^k U_i,B(x,2r)\right)}{\mu(B(x,r))}\\
&\qquad\quad\le \limsup_{r\to 0^+}\sum_{i=1}^k r\frac{\rcapa_1(B(x,r)\setminus U_i,B(x,2r))}{\mu(B(x,r))}\\
&\qquad\qquad\quad\quad \le\sum_{i=1}^k \limsup_{r\to 0^+}r\frac{\rcapa_1(B(x,r)\setminus U_i,B(x,2r))}{\mu(B(x,r))}=0
\end{align*}
by the fact that each $U_i$ is $1$-finely open. Thus $\bigcap_{i=1}^k U_i$ is a $1$-finely open set.
\end{proof}

Let $\overline{G}^1$ be the $1$-fine closure of $G\subset X$ (smallest $1$-finely closed set containing $G$). In the case $p>1$, a crucial step in showing that Newton-Sobolev functions are $p$-finely continuous (i.e. continuous with respect to the $p$-fine topology) at $p$-quasi every point is showing that $\rcapa_p(\overline{G}^p,\Omega)=\rcapa_p(G,\Omega)$, see the discussion earlier in this section.

\begin{openproblem}
Is it true that
$
\capa_1 (\overline{G}^1)=\capa_1 (G)
$
for every $G\subset X$?
\end{openproblem}


For us it will be enough to have a weaker result that we prove in Proposition \ref{prop:capacity of fine closure}.
Following \cite{KKST3}, we first prove the following local version of the \emph{boxing inequality}.

\begin{lemma}\label{lem:boxing inequality}
Let $x\in X$, let $r>0$, and let $G\subset X$ be a $\mu$-measurable set with
\begin{equation}\label{eq:little G}
\frac{\mu(G\cap B(x,2r))}{\mu(B(x,2r))}\le \frac{1}{2C_d^{\lceil\log_2 ( 128\lambda)\rceil}}.
\end{equation}
Then $\rcapa_1 (I_G\cap B(x,r),B(x,2r))\le CP(G,B(x,2 r))$.
\end{lemma}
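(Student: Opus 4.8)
The plan is to construct an explicit admissible function for the relative capacity $\rcapa_1(I_G\cap B(x,r),B(x,2r))$ out of a suitable truncated and rescaled discrete convolution of $\chi_G$, following the scheme used to prove the boxing inequality in \cite{KKST3}. Write $B:=B(x,r)$ and $2B:=B(x,2r)$. The hypothesis \eqref{eq:little G} guarantees that the density of $G$ in $2B$ is small; by the doubling property this smallness propagates (with a controlled loss) to all balls $B(y,s)$ with $y\in B$ and $s\lesssim r$, which will be what makes the measure theoretic interior $I_G$ the ``small'' side of $G$ locally and lets the relative isoperimetric inequality \eqref{eq:relative isoperimetric inequality} be applied with $G$ (rather than $X\setminus G$) on the left.

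First I would fix a scale, say work with a discrete convolution $w$ of $\chi_G$ at scale comparable to $r$ (or, more robustly, pass to the limit of discrete convolutions at scales $1/i\to0$ as in \eqref{eq:pointwise convergence}), so that $w$ is locally Lipschitz, $0\le w\le 1$, its upper gradient $g$ satisfies $\int g\,d\mu\lesssim P(G,2B')$ on the relevant enlarged ball $2B'$, and $\liminf w\ge \widetilde\gamma$ at points of $I_G$ while $\limsup w$ is small ($\le 1-\widetilde\gamma$, hence bounded below $1$) at points of $O_G$. Then I would multiply by a Lipschitz cutoff $\eta$ that is $1$ on $B$, $0$ outside $\tfrac32 B$, and $C/r$-Lipschitz, and renormalize: the function $u:=\min\{1,\, c\,\eta\,w\}$ for an appropriate constant $c=c(\widetilde\gamma)$ is then admissible for $\rcapa_1(I_G\cap B,2B)$, because $u\ge1$ on $I_G\cap B$ (after taking $i\to\infty$, on $I_G\cap B$ minus an $\mathcal H$-null set, which is harmless for the $1$-capacity by Remark \ref{rmk:capacities and Hausdorff contents} and the outer-capacity property) and $u=0$ outside $2B$. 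A product/chain rule upper gradient estimate gives $g_u\le c(\eta g + w|\nabla\eta|)\le c\,\eta g + (C/r)\chi_{\frac32 B}\, w$, so
\[
\rcapa_1(I_G\cap B,2B)\le \int_{2B} g_u\,d\mu \le C\,P(G,2B') + \frac{C}{r}\int_{\frac32 B} w\,d\mu.
\]
The first term is already of the desired form (after absorbing the enlargement $2B'\subset 2B$-type constants, or by choosing the convolution scale small enough that the dilated balls stay inside $B(x,2r)$). The second term is the crux.

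To control $\tfrac1r\int_{\frac32 B} w\,d\mu$ I would use that $w$ is (up to constants and the limiting procedure) an average of $\chi_G$ over balls of radius $\lesssim r$ centered near $\tfrac32 B$, so $\int_{\frac32 B} w\,d\mu \lesssim \mu(G\cap 2B)$ by the doubling property and bounded overlap. Then the relative isoperimetric inequality \eqref{eq:relative isoperimetric inequality}, applied on $2B$ and using \eqref{eq:little G} to ensure that $\mu(G\cap 2B)=\min\{\mu(G\cap 2B),\mu(2B\setminus G)\}$, yields $\mu(G\cap 2B)\le C r\, P(G,B(x,2\lambda r))$. This is where the precise constant $2C_d^{\lceil\log_2(128\lambda)\rceil}$ in \eqref{eq:little G} is spent: it is exactly enough of a margin that after dilating the ball by the Poincar\'e constant $\lambda$ (and by the fixed factors $3/2$, $2$ coming from the cutoff and the convolution supports, totalling a dilation by at most $128\lambda$) the density of $G$ in the dilated ball is still below $1/2$, so $G$ remains the minority set. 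Combining, $\tfrac1r\int_{\frac32 B} w\,d\mu \le C\,P(G,B(x,2r))$ after a final adjustment of the radius, and adding the two terms gives $\rcapa_1(I_G\cap B,2B)\le C\,P(G,2B)$ as claimed.

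The main obstacle I anticipate is the bookkeeping of dilation factors: making sure that every enlargement of the ball that appears — from the cutoff ($\tfrac32$ and $2$), from the support of the discrete convolution and its upper gradient ($5$, $10\lambda$), and from the Poincar\'e inequality ($\lambda$) — is absorbed by the margin built into \eqref{eq:little G}, so that the relative isoperimetric inequality is legitimately applicable with $G$ on the minority side throughout, and so that all the enlarged balls used in the perimeter estimates still sit inside $B(x,2r)$. A secondary point requiring care is the passage from the $\mathcal H$-a.e.\ pointwise bound \eqref{eq:pointwise convergence} on the $v_i$ to an everywhere admissible competitor for the relative $1$-capacity; this is handled exactly as in Proposition \ref{prop:capacity and adding points to set}, by working with the increasing exhaustion $G_i$ of $I_G\cap B$ on which $v_j>\widetilde\gamma/2$ for all $j\ge i$, using continuity of the ($\BV$- and hence $1$-) capacity along increasing sequences, and discarding an $\mathcal H$-null set.
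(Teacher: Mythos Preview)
Your approach is genuinely different from the paper's, and it does not quite close. The paper does \emph{not} build a single admissible function from a discrete convolution of $\chi_G$ plus a cutoff. Instead it runs a covering argument: for each $y\in I_G\cap B(x,r)$ it finds, by a stopping-time/doubling-radius argument, a scale $t_y\in(0,r/16\lambda)$ at which $\mu(G\cap B(y,t_y))/\mu(B(y,t_y))\in(1/(2C_d),1/2]$, applies the relative isoperimetric inequality \eqref{eq:relative isoperimetric inequality} at that small scale to get $\mu(B(y,t_y))\le Ct_yP(G,B(y,\lambda t_y))$, bounds $\rcapa_1(B(y,5\lambda t_y),B(x,2r))$ by $\mu(B(y,10\lambda t_y))/(5\lambda t_y)$ via an explicit cutoff, and then uses the $5$-covering lemma to sum over a disjoint family. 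The point of choosing $t_y<r/16\lambda$ is precisely that $B(y,10\lambda t_y)\subset B(x,2r)$, so every perimeter term lands inside $B(x,2r)$. The constant $2C_d^{\lceil\log_2(128\lambda)\rceil}$ in \eqref{eq:little G} is spent not on a single global dilation by $128\lambda$, but on comparing $\mu(G\cap B(y,t))/\mu(B(y,t))$ with $\mu(G\cap B(x,2r))/\mu(B(x,2r))$ via doubling when $t\sim r/32\lambda$, to guarantee the density drops below $1/2$ at that intermediate scale.

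The gap in your scheme is the second term. After the cutoff you need $\tfrac{1}{r}\mu(G\cap B(x,2r))\le CP(G,B(x,2r))$, but the relative isoperimetric inequality \eqref{eq:relative isoperimetric inequality} only gives $\tfrac{1}{r}\mu(G\cap B(x,2r))\le CP(G,B(x,2\lambda r))$. There is no ``final adjustment of the radius'' that removes this $\lambda$: shrinking the ball on the right forces you to shrink the ball on the left as well, and the density hypothesis \eqref{eq:little G} does not help here because the $\lambda$-enlargement comes from the Poincar\'e inequality, not from a doubling comparison you can reverse. Your argument therefore yields $\rcapa_1(I_G\cap B(x,r),B(x,2r))\le CP(G,B(x,2\lambda r))$, which is weaker than the stated lemma (though, as it happens, still adequate for the application in Proposition~\ref{prop:capacity of fine closure}). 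The paper avoids the issue entirely by applying the isoperimetric inequality only at scales $t_y$ so small that even the $\lambda$-dilated balls $B(y,\lambda t_y)$ remain inside $B(x,2r)$; this is the idea your proposal is missing.
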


\begin{proof}
Fix $y\in I_G\cap B(x,r)$. Since $y\in I_G$, there exists $s\in (0,r/32\lambda)$ such that
\[
\frac{\mu(G\cap B(y,s))}{\mu(B(y,s))}>\frac 12.
\]
On the other hand, for all $t\in (r/32\lambda,r/16\lambda)$ we have $B(x,2r)\subset B(y,128\lambda t)$ and then
\[
\frac{\mu(G\cap B(y,t))}{\mu(B(y,t))}\le C_d^{\lceil\log_2 ( 128\lambda)\rceil}\frac{\mu(G\cap B(x,2r))}{\mu(B(x,2r))}\le \frac 12
\]
by \eqref{eq:little G}. Thus by repeatedly doubling the radius $s$, we eventually obtain a radius $t_y\in (0,r/16\lambda)$ such that
\[
\frac{1}{2C_d}<\frac{\mu(G\cap B(y,t_y))}{\mu(B(y,t_y))}\le\frac{1}{2}.
\]
By the relative isoperimetric inequality \eqref{eq:relative isoperimetric inequality}, this implies that
\begin{equation}\label{eq:rel isop ineq in boxing proof}
\mu(B(y,t_y))\le C\mu(G\cap B(y,t_y))\le Ct_y P(G,B(y,\lambda t_y)).
\end{equation}
Define the function
\begin{equation}\label{eq:cutoff functions for capacity}
w(z):=\max\left\{0,1-\frac{\dist(z,B(y,5\lambda t_y))}{5\lambda t_y}\right\},
\end{equation}
so that $w=1$ in $B(y,5\lambda t_y)$ and $w=0$ outside $B(y,10\lambda t_y)$.
Note that $w$ has an upper gradient $g:=\frac{1}{5\lambda t_y}\chi_{B(y,10\lambda t_y)\setminus B(y,5\lambda t_y)}$. Then since $B(y,10\lambda t_y)\subset B(x,2r)$,
\[
\rcapa_1(B(y,5\lambda t_y),B(x,2r))
\le \int_{B(y,10\lambda t_y)} g\,d\mu\le\frac{\mu(B(y,10\lambda t_y))}{5\lambda t_y}.
\]
Take a covering $\{B(y,\lambda t_y)\}_{y\in I_G\cap B(x,r)}$. By the 5-covering theorem, we can choose a countable disjoint collection $\{B(y_i, \lambda t_i)\}_{i\in\N}$ such that the balls $B(y_i,5 \lambda t_i)$ cover $I_G\cap B(x,r)$. Then we have by the countable subadditivity of capacity
\begin{align*}
\rcapa_1 (I_G\cap B(x,r),B(x,2r))
&\le \sum_{i\in\N}\rcapa_1(B(y_i,5 \lambda t_i),B(x,2r))\\
&\le \sum_{i\in\N}\frac{\mu(B(y_i,10\lambda t_i))}{5\lambda t_i}\\
&\le C\sum_{i\in\N}\frac{\mu(B(y_i,t_i))}{t_i}\\
&\overset{\eqref{eq:rel isop ineq in boxing proof}}{\le} C\sum_{i\in\N} P(G,B(y_i,\lambda t_i))\\
&\le P(G,B(x,2 r)).
\end{align*}
\end{proof}

It is easy to see that for any set $A\subset X$ and any ball $B(x,r)$,
\begin{equation}\label{eq:capacity and Hausdorff measure}
\rcapa_1(A\cap B(x,r),B(x,2r))\le C \mathcal H(A\cap B(x,r)).
\end{equation}
This can be deduced by using suitable cutoff functions similar to those given in \eqref{eq:cutoff functions for capacity}.

\begin{proposition}\label{prop:capacity of fine closure}
For any $G\subset X$ we have $\capa_1(\overline{G}^1)\le C\capa_1(G)$. 
\end{proposition}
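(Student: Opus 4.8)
The plan is to reduce to a bounded-perimeter open set and then identify the $1$-fine closure up to an $\mathcal H$-negligible set. We may assume $\capa_1(G)<\infty$. By Lemma~\ref{lem:G has finite perimeter} there is an open set $U\supset G$ with $\capa_1(U)\le C\capa_1(G)$ and $P(U,X)\le C\capa_1(G)<\infty$; since $\overline U^1$ is $1$-finely closed and contains $G$, we have $\overline G^1\subset\overline U^1$, so it suffices to prove $\capa_1(\overline U^1)\le C\capa_1(U)$. Recalling \eqref{eq:definition of measure theoretic interior} and \eqref{eq:definition of measure theoretic exterior}, every point of $X$ lies in exactly one of $O_U$, $I_U$, $\partial^*U$, and $U\subset I_U$ because $U$ is open; hence $X\setminus O_U=U\cup I_U\cup\partial^*U$. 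The target is therefore to produce an $\mathcal H$-null set $N\subset O_U$ for which $O_U\setminus N$ is $1$-finely open: then $\overline U^1\subset(U\cup I_U\cup\partial^*U)\cup N$, and by Proposition~\ref{prop:capacity and adding points to set}, finite subadditivity of $\capa_1$, and $\capa_1(N)=0$ (Remark~\ref{rmk:capacities and Hausdorff contents}) we obtain $\capa_1(\overline U^1)\le\capa_1(U\cup I_U\cup\partial^*U)+\capa_1(N)\le C\capa_1(U)$.

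To see that $O_U\setminus N$ is $1$-finely open one must verify that its complement $(U\cup I_U\cup\partial^*U)\cup N$ is $1$-thin at every $x\in O_U\setminus N$. Fix $x\in O_U$. Since $\mu(U\cap B(x,2r))/\mu(B(x,2r))\to0$, for all small $r>0$ the density bound in \eqref{eq:little G} holds with $U$ in place of $G$, so Lemma~\ref{lem:boxing inequality} gives $\rcapa_1(I_U\cap B(x,r),B(x,2r))\le CP(U,B(x,2r))$. By \eqref{eq:capacity and Hausdorff measure}, then \eqref{eq:def of theta} together with $\theta_U\ge\alpha$, we also have $\rcapa_1(\partial^*U\cap B(x,r),B(x,2r))\le C\mathcal H(\partial^*U\cap B(x,r))\le(C/\alpha)P(U,B(x,2r))$, while $\rcapa_1(N\cap B(x,r),B(x,2r))\le C\mathcal H(N\cap B(x,r))=0$ by \eqref{eq:capacity and Hausdorff measure}. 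Summing via subadditivity of the relative capacity,
\[
r\,\frac{\rcapa_1(((U\cup I_U\cup\partial^*U)\cup N)\cap B(x,r),\,B(x,2r))}{\mu(B(x,r))}\le C\,\frac{r\,P(U,B(x,2r))}{\mu(B(x,r))},
\]
so the whole matter reduces to showing that this right-hand side tends to $0$ as $r\to0$ for $\mathcal H$-almost every $x\in O_U$.

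This density statement is the step I expect to be \textbf{the main obstacle}. The crucial point is that, although $\mathcal H$ need not be $\sigma$-finite on $X$, the perimeter measure $\|D\chi_U\|$ is a \emph{finite} Radon measure (since $P(U,X)<\infty$) which by \eqref{eq:def of theta} is carried by $\partial^*U$, so $\|D\chi_U\|(O_U)=0$. For fixed $\lambda>0$ I would consider $A_\lambda:=\{x\in O_U:\limsup_{r\to0}r\,\|D\chi_U\|(B(x,r))/\mu(B(x,r))>\lambda\}$; using outer regularity together with $\|D\chi_U\|(O_U)=0$, choose an open set $W\supset O_U$ with $\|D\chi_U\|(W)$ as small as desired, cover $A_\lambda$ by balls $B(x,r)\subset W$ of arbitrarily small radius on which the displayed ratio exceeds $\lambda$, and use the $5$-covering theorem to extract a disjoint subfamily whose $5$-fold dilations still cover $A_\lambda$; comparing their measures and radii bounds $\mathcal H_{R}(A_\lambda)$ by $(C/\lambda)\|D\chi_U\|(W)$, so $\mathcal H(A_\lambda)=0$. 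Taking the union over $\lambda=1/k$, $k\in\N$, shows that the set $N\subset O_U$ where $\limsup_{r\to0}r\,\|D\chi_U\|(B(x,r))/\mu(B(x,r))>0$ is $\mathcal H$-null, and the doubling property lets us pass from $B(x,r)$ to $B(x,2r)$. This is exactly the input needed in the previous paragraph, so the chain $\capa_1(\overline G^1)\le\capa_1(\overline U^1)\le C\capa_1(U)\le C\capa_1(G)$ finishes the proof. (The sharp constant $C=1$ — the Open Problem above — is not reached by this method.)
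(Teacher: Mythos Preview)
Your proof is correct and follows essentially the same route as the paper: reduce via Lemma~\ref{lem:G has finite perimeter} to an open set of finite perimeter, use the boxing inequality (Lemma~\ref{lem:boxing inequality}) together with \eqref{eq:capacity and Hausdorff measure} and \eqref{eq:def of theta} to show that $I_U\cup\partial^*U\cup N$ is $1$-finely closed for some $\mathcal H$-null $N$, and conclude by Proposition~\ref{prop:capacity and adding points to set}. The only cosmetic differences are that the paper quotes the density result from \cite[Theorem~2.4.3]{AT} applied to $\nu=\mathcal H|_{\partial^*G}$, whereas you carry out the Vitali covering argument by hand for the (comparable) measure $\nu=\Vert D\chi_U\Vert$.
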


\begin{proof}
We can assume that $\capa_1(G)<\infty$.
First assume also that $G$ is open and that $P(G,X)<\infty$.
By \cite[Theorem 2.4.3]{AT} we know that if $\nu$ is a Radon measure on $X$, $t>0$, and $A\subset X$ is a Borel set for which we have
\[
\limsup_{r\to 0^+}r\frac{\nu(B(x,r))}{\mu(B(x,r))}\ge t
\]
for all $x\in A$, then $\nu(A)\ge t\mathcal H(A)$.
Since $G$ is of finite perimeter, we have $\mathcal H(\partial^*G)<\infty$ by \eqref{eq:def of theta}. By using \eqref{eq:capacity and Hausdorff measure} and the above density result with $\nu=\mathcal H|_{\partial^*G}$, we get
\begin{equation}\label{eq:estimate for density of measure theoretic boundary of G}
\limsup_{r\to 0^+}r\frac{\rcapa_1(\partial^*G\cap B(x,r),B(x,2r))}{\mu(B(x,2r))}
\le C\limsup_{r\to 0^+}r\frac{\mathcal H(\partial^*G\cap B(x,2r))}{\mu(B(x,2r))}=0
\end{equation}
for $\mathcal H$-almost every $x\in X\setminus \partial^* G$, that is,
for every $x\in X\setminus (\partial^*G\cup N)$ with $\mathcal H(N)=0$.

By Lemma \ref{lem:boxing inequality}, if $x\in X$ and $r>0$ satisfy
\[
\frac{\mu(G\cap B(x,2r))}{\mu(B(x,2r))}\le \frac{1}{2C_d^{\lceil\log_2 ( 128\lambda)\rceil}},
\]
then $\rcapa_1 (I_G\cap B(x,r),B(x,2r))\le CP(G,B(x,2 r))$.
Thus we get for all $x\in X\setminus (I_G\cup \partial^*G\cup N)$
\begin{align*}
\limsup_{r\to 0^+}r\frac{\rcapa_1 (I_G\cap B(x,r),B(x,2r))}{\mu(B(x,r))}
&\le C\limsup_{r\to 0^+}r\frac{P(G,B(x,2 r))}{\mu(B(x,r))}\\
&\overset{\eqref{eq:def of theta}}{\le} C\limsup_{r\to 0^+}r\frac{\mathcal H(\partial^*G\cap B(x,2 r))}{\mu(B(x,r))}\\
&\overset{\eqref{eq:estimate for density of measure theoretic boundary of G}}{=} 0.
\end{align*}
By combining this with \eqref{eq:estimate for density of measure theoretic boundary of G}, we have
\[
\limsup_{r\to 0^+}r\frac{\rcapa_1 ((I_G\cup \partial^*G)\cap B(x,r), B(x,2r))}{\mu(B(x,r))}
=0
\]
for all $x\in X\setminus (I_G\cup \partial^*G\cup N)$.
Since $G$ is open, $G\subset I_G$.
Thus $I_G\cup \partial^*G\cup N\supset G$ is a $1$-finely closed set, so that $\overline{G}^1\subset  I_G\cup \partial^*G\cup N$.
By Proposition \ref{prop:capacity and adding points to set}, we have
\[
\capa_1(I_G\cup \partial^* G\cup N)=\capa_1(I_G\cup \partial^* G)\le C\capa_1(G).
\]
Thus we have the result when $G$ is open and of finite perimeter.
In the general case, by Lemma \ref{lem:G has finite perimeter} we can choose an open set $U\supset G$ with $\capa_1(U)\le C\capa_1(G)$ and $P(U,X)<\infty$. Thus we have
\[
\capa_1(\overline{G}^1)\le \capa_1(\overline{U}^1)\le C\capa_1(U)\le C\capa_1(G).
\]
\end{proof}

\section{Fine continuity}

Since $\BV$ functions can have multiple jump values $u^1,\ldots,u^n$ in their jump sets (recall the definition from \eqref{eq:definition of n jump values}), we need to consider a notion of continuity for set-valued functions.

\begin{definition}
Let $\mathcal U$ be a topology on $X$. We say that the function $y\mapsto \{u^1(y),\ldots,u^n(y)\}$ is \emph{upper hemicontinuous} with respect to $\mathcal U$ at the point $x$ if for every $\eps>0$, there exists $U\in \mathcal U$ with $x\in U$ such that
\[
\min_{l_2\in\{1,\ldots,n\}} |u^{l_1}(y)-u^{l_2}(x)|<\eps
\]
for each $l_1= 1,\ldots,n$ and all $y\in U$.
\end{definition}

Now we can prove the main result of this paper.

\begin{theorem}\label{thm:fine continuity}
Let $u\in\BV(X)$. Then the function $y\mapsto \{u^1(y),\ldots,u^n(y)\}$ is $1$-finely upper hemicontinuous, i.e. upper hemicontinuous with respect to the $1$-fine topology, at $\mathcal H$-almost every $x\in X$.
\end{theorem}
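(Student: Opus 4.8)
The plan is to combine the quasicontinuity result, Theorem~\ref{thm:quasicontinuity}, with the rigidity result for the $1$-fine closure, Proposition~\ref{prop:capacity of fine closure}, via a standard exhaustion argument. First I would fix $u\in\BV(X)$ and, for each $j\in\N$, apply Theorem~\ref{thm:quasicontinuity} with $\eps=2^{-j}$ to obtain an open set $G_j\subset X$ with $\capa_1(G_j)<2^{-j}$ such that $y_k\to x$ with $y_k,x\in X\setminus G_j$ implies $\min_{l_2}|u^{l_1}(y_k)-u^{l_2}(x)|\to 0$ for each $l_1$. Replacing $G_j$ by its $1$-fine closure $\overline{G_j}^1$, Proposition~\ref{prop:capacity of fine closure} gives $\capa_1(\overline{G_j}^1)\le C\capa_1(G_j)\le C2^{-j}$, and the set $V_j:=X\setminus\overline{G_j}^1$ is then $1$-finely open and still satisfies the sequential property of Theorem~\ref{thm:quasicontinuity} (since $V_j\subset X\setminus G_j$). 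Set $F:=\bigcap_{j\in\N}\overline{G_j}^1$; then $\capa_1(F)\le\inf_j C2^{-j}=0$, so by Remark~\ref{rmk:capacities and Hausdorff contents} we have $\mathcal H(F)=0$, and it suffices to prove $1$-fine upper hemicontinuity at every $x\in X\setminus F$.

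Next, fix $x\in X\setminus F$, so $x\in V_j$ for some $j$. I claim $y\mapsto\{u^1(y),\dots,u^n(y)\}$ is $1$-finely upper hemicontinuous at $x$, using the $1$-finely open neighborhood $V_j$. Suppose not: then there is $\eps>0$ such that for every $1$-finely open $U\ni x$ there exists $y\in U$ with $\min_{l_2}|u^{l_1}(y)-u^{l_2}(x)|\ge\eps$ for some $l_1$. The key point is to convert this failure of fine upper hemicontinuity into a sequence $y_k\to x$ in the metric sense, contradicting the sequential property of $V_j$. To do this I would consider, for each $k\in\N$, the metric ball $B(x,1/k)$, which is metrically open hence $1$-finely open, so $U_k:=V_j\cap B(x,1/k)$ is $1$-finely open and contains $x$; applying the negation of upper hemicontinuity to $U_k$ yields a point $y_k\in U_k$ with $\min_{l_2}|u^{l_1(k)}(y_k)-u^{l_2}(x)|\ge\eps$ for some index $l_1(k)\in\{1,\dots,n\}$. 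Since $n$ is finite, pass to a subsequence along which $l_1(k)$ is a constant $l_1$; then $y_k\to x$ (as $y_k\in B(x,1/k)$), $y_k,x\in V_j\subset X\setminus G_j$, yet $\min_{l_2}|u^{l_1}(y_k)-u^{l_2}(x)|\ge\eps$ for all $k$, contradicting Theorem~\ref{thm:quasicontinuity}. Hence the assumed failure is impossible, and upper hemicontinuity at $x$ holds.

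The main obstacle I anticipate is the passage from the topological (neighborhood-based) statement of upper hemicontinuity to the sequential (metric) statement of quasicontinuity. This works cleanly precisely because the $1$-fine topology is \emph{finer} than the metric topology — every metric ball is $1$-finely open, since its complement is closed and a metrically closed set avoiding a point $x$ is trivially $1$-thin at $x$ (the relevant capacities vanish for small $r$). This observation, which should be recorded as a small preliminary remark, is what allows the balls $B(x,1/k)$ to serve as legitimate $1$-fine neighborhoods and lets us extract the approaching sequence. A secondary point to handle carefully is that applying Proposition~\ref{prop:capacity of fine closure} enlarges $G_j$ to $\overline{G_j}^1$, which is $1$-finely closed by definition, so its complement $V_j$ is genuinely an element of the $1$-fine topology — this is exactly what makes $V_j$ an admissible choice of neighborhood $U$ in the definition of upper hemicontinuity. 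Everything else is the routine Borel–Cantelli-style bookkeeping of a quasicontinuity-to-fine-continuity argument.
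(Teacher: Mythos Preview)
Your proposal is correct and follows essentially the same approach as the paper: apply Theorem~\ref{thm:quasicontinuity} at scale $1/i$ (or $2^{-j}$), replace each $G_j$ by its $1$-fine closure via Proposition~\ref{prop:capacity of fine closure}, note that the intersection $\bigcap_j\overline{G_j}^1$ has zero $\mathcal H$-measure, and for $x\notin\overline{G_j}^1$ use $B(x,r)\setminus\overline{G_j}^1$ as the required $1$-finely open neighborhood. The only cosmetic difference is that the paper argues directly---observing that the sequential conclusion of Theorem~\ref{thm:quasicontinuity} immediately yields a radius $r>0$ for which the $\eps$-condition holds on $B(x,r)\setminus G_j$, hence on the $1$-finely open set $B(x,r)\setminus\overline{G_j}^1$---whereas you unpack this same step by contradiction, extracting a bad sequence from the $U_k=V_j\cap B(x,1/k)$; the content is identical.
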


\begin{proof}
Take sets $G_i\subset X$ with $\capa_1(G_i)< 1/i$, $i\in\N$, as given by our quasicontinuity-type result, Theorem \ref{thm:quasicontinuity}. Then also
\[
\capa_1(\overline{G_i}^1)< C/i
\]
by Proposition \ref{prop:capacity of fine closure}. For $1$-quasi every and thus for $\mathcal H$-almost every $x\in X$, we have $x\notin \bigcap_{i\in\N}\overline{G_i}^1$. Fix such $x$, so that $x\notin \overline{G_j}^1$ for some $j\in\N$, and fix $\eps>0$. Theorem \ref{thm:quasicontinuity} gives a radius $r>0$ such that 
\[
\min_{l_2\in\{1,\ldots,n\}} |u^{l_1}(y)-u^{l_2}(x)|<\eps
\]
for each $l_1=1,\ldots,n$ and all $y\in B(x,r)\setminus G_j$, in particular for all $y\in B(x,r)\setminus \overline{G_j}^1$. But $B(x,r)\setminus \overline{G_j}^1$ is a $1$-finely open set containing $x$. Thus we have the result.
\end{proof}

\begin{corollary}
Let $u\in N^{1,1}(X)$. Then $u$ is $1$-finely continuous at $1$-quasi every $x\in X$.
\end{corollary}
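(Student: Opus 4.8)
The plan is to derive the Corollary from Theorem~\ref{thm:fine continuity} by exploiting the special structure of Newton-Sobolev functions, namely that they are honest pointwise-defined functions with no jump set in the relevant sense. First I would recall that every $u\in N^{1,1}(X)$ belongs to $\BV(X)$ (an upper gradient in $L^1$ is, in particular, admissible in the definition of the total variation, so $\|Du\|(X)\le \inf\|g\|_{L^1(X)}<\infty$), so the theorem applies to $u$ viewed as a $\BV$ function. The key observation is that for a Newton-Sobolev function the approximate limits coincide with the actual pointwise value at $1$-quasi every point: by $1$-quasicontinuity of $u\in N^{1,1}(X)$ (see \cite{BBS1} or \cite{BB}), for $1$-quasi every $x$ one has $u^\wedge(x)=u^\vee(x)=u(x)$, hence $S_u$ has $1$-capacity (equivalently $\mathcal H$-measure) zero, and moreover $u^1(x)=\cdots=u^n(x)=u(x)$ at all such points, since all the intermediate jump values $u^l$ are squeezed between $u^\wedge$ and $u^\vee$.

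Next I would combine this with the conclusion of Theorem~\ref{thm:fine continuity}. At $\mathcal H$-almost every $x$, the set-valued map $y\mapsto\{u^1(y),\dots,u^n(y)\}$ is $1$-finely upper hemicontinuous at $x$; intersecting with the $1$-quasi-every (hence $\mathcal H$-a.e.) set where $u^1(x)=\cdots=u^n(x)=u(x)$ and where the same collapse holds, I restrict attention to those good points $x$. At such an $x$, upper hemicontinuity says that for every $\eps>0$ there is a $1$-finely open $U\ni x$ with $\min_{l_2}|u^{l_1}(y)-u^{l_2}(x)|<\eps$ for all $l_1$ and all $y\in U$; since $u^{l_2}(x)=u(x)$ for every $l_2$, this reads $|u^{l_1}(y)-u(x)|<\eps$ for every $l_1$ and all $y\in U$. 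Shrinking $U$ further so that it avoids the (negligible, $1$-finely closed after taking fine closure) set where the jump values of $u$ do not all coincide with $u$ — using that the complement of a set of $1$-capacity zero is $1$-finely open, or alternatively intersecting with $X\setminus\overline{N}^1$ where $N$ is the bad set and invoking Proposition~\ref{prop:capacity of fine closure} to see $\capa_1(\overline N^1)=0$ — we may assume $u^1(y)=u(y)$ for $y\in U$, whence $|u(y)-u(x)|<\eps$ for all $y\in U$. That is exactly $1$-fine continuity of $u$ at $x$.

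Finally I would note that the exceptional set is controlled: it is the union of the $\mathcal H$-null set from Theorem~\ref{thm:fine continuity}, the $1$-capacity-null jump/quasicontinuity exceptional set for $u\in N^{1,1}$, and its $1$-fine closure, all of which have $1$-capacity zero by Proposition~\ref{prop:capacity of fine closure} and Remark~\ref{rmk:capacities and Hausdorff contents}; hence the conclusion holds at $1$-quasi every $x\in X$. The main obstacle, and the only genuinely delicate point, is the identification $u^1=\cdots=u^n=u$ outside a set of $1$-capacity zero for $u\in N^{1,1}(X)$, together with the bookkeeping that lets one replace ``outside an $\mathcal H$-null set'' by a genuinely $1$-finely open neighborhood; the latter is handled exactly as in the proof of Theorem~\ref{thm:fine continuity}, by passing to $1$-fine closures of the bad sets and applying Proposition~\ref{prop:capacity of fine closure}.
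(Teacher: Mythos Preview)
Your approach is essentially the same as the paper's: reduce to $\BV$, collapse the jump values $u^1=\cdots=u^n=u$ outside a $1$-capacity null set $N$, apply Theorem~\ref{thm:fine continuity}, and then discard $N$ from the resulting $1$-finely open neighborhood. Two points deserve comment.

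First, your justification for the collapse $u^{\wedge}(x)=u^{\vee}(x)=u(x)$ at $1$-quasi every $x$ via $1$-quasicontinuity is not quite right. Quasicontinuity only gives continuity of $u$ restricted to $X\setminus G$; it says nothing directly about the measure-density limits defining $u^{\wedge}$ and $u^{\vee}$, since $G$ could in principle have positive density at points of $X\setminus G$. What is actually needed is the Lebesgue point property for $N^{1,1}$ functions, i.e.\ $\lim_{r\to 0}\vint{B(x,r)}|u-u(x)|\,d\mu=0$ at $1$-quasi every $x$, which the paper cites from \cite[Theorem~4.1, Remark~4.2]{KKST3}. Once you invoke that result your argument goes through unchanged. (Similarly, your one-line inclusion $N^{1,1}\subset\BV$ via ``an upper gradient is admissible in the total variation'' skips the approximation step; the paper routes this through density of Lipschitz functions in $N^{1,1}$.)

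Second, for removing the bad set $N$ you offer two routes: either note that $\capa_1(N)=0$ forces $X\setminus N$ to be $1$-finely open, or pass to the $1$-fine closure $\overline{N}^1$ and use Proposition~\ref{prop:capacity of fine closure}. The paper takes the first route directly, simply writing that $U\setminus N$ is $1$-finely open; your detour through Proposition~\ref{prop:capacity of fine closure} works but is heavier than necessary.
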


Recall that we understand functions in the class $N^{1,1}(X)$ to be defined everywhere, unlike $\BV$ functions that are defined only up to sets of $\mu$-measure zero.

\begin{proof}
Since Lipschitz functions are dense in $N^{1,1}(X)$, see \cite{BBS1} or \cite[Theorem 5.1]{BB}, we have $N^{1,1}(X)\subset \BV(X)$, so that Theorem \ref{thm:fine continuity} applies to $u$. By \cite[Theorem 4.1, Remark 4.2]{KKST3}, there exists $N\subset X$ with $\capa_1(N)=\mathcal H(N)=0$ such that every $x\in X\setminus N$ is a Lebesgue point, that is,
\[
\lim_{r\to 0^+}\vint{B(x,r)}|u-u(x)|\,d\mu=0.
\]
Thus $u(x)=u^1(x)=\ldots =u^n(x)$ for every such $x$. Assume that the function $y\mapsto \{u^1(y),\ldots,u^n(y)\}$ is $1$-finely upper hemicontinuous at $x\in X\setminus N$, which is true for $\mathcal H$-almost every and thus $1$-quasi every point $x\in X$. Let $\eps>0$. By Theorem \ref{thm:fine continuity} there exists a $1$-finely open set $U\ni x$ such that
\[
\min_{l_2\in\{1,\ldots,n\}} |u^{l_1}(y)-u^{l_2}(x)|<\eps
\]
for each $l_1= 1,\ldots,n$ and all $y\in U$. Then $U\setminus N$ is a $1$-finely open set containing $x$, and $|u(y)-u(x)|<\eps$ for all $y\in U$.
\end{proof}

Now consider the following.
We know (see e.g. \cite[Remark 5.9.2]{Zie89}) that if $u\in L^1(X)$ has a Lebesgue point at $x\in X$, i.e.
\[
\lim_{r\to 0^+}\,\vint{B(x,r)}|u-u^{\vee}(x)|\,d\mu=0,
\]
then there exists a set $A_x\ni x$ with density $1$ at $x$, such that $u^{\vee}|_{A_x}$ is continuous (instead of $u^{\vee}$ we could consider some other pointwise representative).
Similarly, by using the analogs of Lebesgue's differentiation theorem for $\BV$ functions, see \cite[Theorem 5.3]{LaSh}, we obtain the following.
\begin{proposition}\label{prop:consequence of Lebesgue points}
Let $u\in\BV(X)$. Then for $\mathcal H$-almost every $x\in X$ there exists a set $A_x\ni x$ with
density $1$ at $x$
such that if $y_k\to x$ with $y_k\in A_x$, then
\[
\min_{l_2\in \{1,\ldots,n\}}|u^{l_1}(y_k)-u^{l_2}(x)|\to 0
\]
for each $l_1=1,\ldots,n$.
\end{proposition}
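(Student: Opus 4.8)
The plan is to mimic the classical construction recalled just before the statement (see \cite[Remark 5.9.2]{Zie89}), with the ordinary Lebesgue-point condition replaced by its multi-valued $\BV$ analog. Concretely, I expect \cite[Theorem 5.3]{LaSh} to provide a set $N\subset X$ with $\mathcal H(N)=0$ such that, writing
\[
F_x(y):=\max_{l_1\in\{1,\ldots,n\}}\ \min_{l_2\in\{1,\ldots,n\}}|u^{l_1}(y)-u^{l_2}(x)|,
\]
we have $\vint{B(x,r)}F_x\,d\mu\to 0$ as $r\to 0^+$ for every $x\in X\setminus N$. Should \cite[Theorem 5.3]{LaSh} instead be phrased in terms of ball averages of $u$ rather than of the pointwise functions $u^l$, the first step would be to translate it into the above form, using that for $\mathcal H$-a.e.\ $y$ the numbers $u^1(y),\ldots,u^n(y)$ are precisely those seen by the ball averages of $u$, in the spirit of \eqref{eq:pointwise convergence}; this translation is the only delicate point of the whole argument.

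Fixing $x\in X\setminus N$, I would set, for $m\in\N$,
\[
E_m:=\{y\in X:\ F_x(y)>1/m\}.
\]
Taking $l_2=l_1$ gives $F_x(x)=0$, so $x\notin E_m$ for all $m$. By Chebyshev's inequality and the limit above, $\mu(E_m\cap B(x,r))/\mu(B(x,r))\to 0$ as $r\to 0^+$ for each fixed $m$, so one can choose radii $r_1>r_2>\cdots$ with $r_m\to 0$ and $\mu(E_m\cap B(x,\rho))\le 2^{-m}\mu(B(x,\rho))$ whenever $0<\rho\le r_m$. Define
\[
A_x:=X\setminus\bigcup_{m\in\N}\bigl(E_m\cap(B(x,r_m)\setminus B(x,r_{m+1}))\bigr),
\]
so that $x\in A_x$. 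For the density claim, fix $r\in(r_{m+1},r_m]$: the annular pieces with index $k<m$ are disjoint from $B(x,r)$ (since then $B(x,r)\subset B(x,r_{k+1})$), the piece with index $k=m$ contributes at most $\mu(E_m\cap B(x,r))\le 2^{-m}\mu(B(x,r))$, and each piece with index $k>m$ is contained in $B(x,r_k)\subset B(x,r)$ and contributes at most $2^{-k}\mu(B(x,r_k))\le 2^{-k}\mu(B(x,r))$. Summing the geometric series gives $\mu(B(x,r)\setminus A_x)\le 2^{1-m}\mu(B(x,r))$, which tends to $0$ as $r\to 0^+$.

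Finally, if $y_k\to x$ with $y_k\in A_x$, then for any fixed $m$ and all sufficiently large $k$ we have $y_k\in B(x,r_m)$; either $y_k=x$ (so all the relevant minima vanish) or $y_k$ lies in some annulus $B(x,r_{m'})\setminus B(x,r_{m'+1})$ with $m'\ge m$, and then membership in $A_x$ forces $y_k\notin E_{m'}$, i.e.\ $F_x(y_k)\le 1/m'\le 1/m$. Hence $\limsup_k F_x(y_k)\le 1/m$ for every $m$, so $F_x(y_k)\to 0$, which is exactly the asserted convergence of $\min_{l_2}|u^{l_1}(y_k)-u^{l_2}(x)|$ for each $l_1$. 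As noted, the main obstacle is pinning down the precise form of the $\BV$ Lebesgue differentiation theorem that yields the decay of $\vint{B(x,r)}F_x\,d\mu$; once that is in hand, everything else is the routine density-$1$ bookkeeping above.
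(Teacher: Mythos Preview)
Your proposal is correct and is precisely the argument the paper has in mind: the paper does not give a detailed proof of this proposition but simply states that it follows ``similarly'' from the classical density-$1$ construction of \cite[Remark 5.9.2]{Zie89} combined with the $\BV$ analog of Lebesgue's differentiation theorem \cite[Theorem 5.3]{LaSh}, which is exactly what you carry out. Your identification of the only delicate step---matching the precise formulation of \cite[Theorem 5.3]{LaSh} to the decay of $\vint{B(x,r)}F_x\,d\mu$---is accurate and appropriately flagged.
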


From Theorem \ref{thm:fine continuity} we get the following strengthening of this result.
\begin{proposition}\label{prop:consequence of fine continuity}
Let $u\in\BV(X)$. Then for $\mathcal H$-almost every $x\in X$ there exists a set $A_x\ni x$ with
\[
\lim_{r\to 0^+}r\frac{\mathcal H_1(B(x,r)\setminus A_x)}{\mu(B(x,r))}=0
\]
such that if $y_k\to x$ with $y_k\in A_x$, then
\[
\min_{l_2\in \{1,\ldots,n\}}|u^{l_1}(y_k)-u^{l_2}(x)|\to 0
\]
for each $l_1=1,\ldots,n$.
\end{proposition}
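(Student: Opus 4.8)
The plan is to rerun the proof of Theorem~\ref{thm:fine continuity} while keeping quantitative track of the scales, and then to stitch the resulting information together by a diagonal (annular) construction; the one point requiring care is to arrange the exceptional sets so that a certain tail sum collapses.

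\textbf{Setup.} First I would replace the exceptional sets of Theorem~\ref{thm:quasicontinuity} by a \emph{nested} family. Applying that theorem with $\varepsilon=2^{-i}$ gives open sets $\widetilde G_i$, $\capa_1(\widetilde G_i)<2^{-i}$, along which the multiple-limit convergence holds; put $G_j:=\bigcup_{i\ge j}\widetilde G_i$, so that $G_1\supseteq G_2\supseteq\cdots$, $\capa_1(G_j)<2^{-j+1}$, the multiple-limit property still holds along sequences in $X\setminus G_j\subseteq X\setminus\widetilde G_j$, and by Proposition~\ref{prop:capacity of fine closure} the $1$-fine closures satisfy $\overline{G_1}^1\supseteq\overline{G_2}^1\supseteq\cdots$ with $\capa_1(\overline{G_j}^1)\le C2^{-j+1}$. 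In particular $\capa_1(\bigcap_j\overline{G_j}^1)=0$, so by Remark~\ref{rmk:capacities and Hausdorff contents} the set $\bigcap_j\overline{G_j}^1$ is $\mathcal H$-null; hence for $\mathcal H$-almost every $x$ there is $j_0(x)$ with $x\notin\overline{G_j}^1$ for all $j\ge j_0(x)$, and for each such $j$ Theorem~\ref{thm:quasicontinuity} gives a radius $q_j>0$ with $\min_{l_2}|u^{l_1}(y)-u^{l_2}(x)|<1/j$ for all $l_1$ and all $y\in B(x,q_j)\setminus G_j\supseteq B(x,q_j)\setminus\overline{G_j}^1$.

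\textbf{Hausdorff-content thinness.} Next I claim that for $\mathcal H$-almost every $x\notin\overline{G_j}^1$,
\[
\lim_{\rho\to0^+}\rho\,\frac{\mathcal H_1(\overline{G_j}^1\cap B(x,\rho))}{\mu(B(x,\rho))}=0 .
\]
Being $1$-finely closed and not containing $x$, the set $\overline{G_j}^1$ is $1$-thin at $x$, which is the same statement with $\rcapa_1(\overline{G_j}^1\cap B(x,\rho),B(x,2\rho))$ in place of the content. To replace relative capacity by Hausdorff content I would rerun the argument of Proposition~\ref{prop:capacity of fine closure}: choose (Lemma~\ref{lem:G has finite perimeter}) an open $U_j\supseteq G_j$ of finite perimeter with $\capa_1(U_j),P(U_j,X)\le C2^{-j+1}$, so that $\overline{G_j}^1\subseteq\overline{U_j}^1\subseteq I_{U_j}\cup\partial^*U_j\cup N_j$ with $\mathcal H(N_j)=0$; the covering constructed in the proof of Lemma~\ref{lem:boxing inequality} already gives $\mathcal H_1(I_{U_j}\cap B(x,\rho))\le CP(U_j,B(x,2\rho))$ once $\rho$ is small and $\mu(U_j\cap B(x,2\rho))/\mu(B(x,2\rho))$ is small, while $\mathcal H_1(\partial^*U_j\cap B(x,\rho))\le\mathcal H(\partial^*U_j\cap B(x,\rho))$; then \eqref{eq:def of theta}, the density theorem applied to $\mathcal H|_{\partial^*U_j}$ exactly as in that proof, and the doubling property yield the displayed limit for every $x$ outside a set $E_j$ with $\mathcal H_1(E_j)\le C2^{-j}$ (here one also uses Proposition~\ref{prop:capacity and adding points to set} and Remark~\ref{rmk:capacities and Hausdorff contents} to bound $\mathcal H_1(I_{U_j})$ and $\mathcal H_1(\partial^*U_j)$). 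Since $\sum_j\mathcal H_1(E_j)<\infty$, the set $\limsup_j E_j$ is $\mathcal H$-null, so after replacing $j_0(x)$ by a larger $J=J(x)$ we may assume the displayed limit holds for every $j\ge J$. Fix $\varepsilon_j:=2^{-j}$ and radii $\sigma_j>0$ with $\rho\,\mathcal H_1(\overline{G_j}^1\cap B(x,\rho))/\mu(B(x,\rho))<\varepsilon_j$ for $0<\rho\le\sigma_j$.

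\textbf{Diagonal construction.} Put $r_J:=\min(q_J,\sigma_J)$ and $r_j:=\min(q_j,\sigma_j,r_{j-1}/2)$ for $j>J$, so $r_j\downarrow0$, and set
\[
A_x:=\{x\}\cup\bigcup_{j\ge J}\bigl((B(x,r_j)\setminus\overline{G_j}^1)\setminus B(x,r_{j+1})\bigr).
\]
Then $x\in A_x$; if $y\in A_x\setminus\{x\}$ lies in the annulus $r_{j+1}\le d(x,y)<r_j$, then $y\in B(x,r_j)\setminus\overline{G_j}^1\subseteq B(x,q_j)\setminus G_j$, so $\min_{l_2}|u^{l_1}(y)-u^{l_2}(x)|<1/j$; consequently, if $y_k\to x$ with $y_k\in A_x$, the index of the annulus containing $y_k$ tends to $\infty$ and the required multiple-limit convergence follows. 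For the density statement, fix $m\ge J$ and $\rho\in(r_{m+1},r_m]$. Any $z\in B(x,\rho)\setminus(A_x\cup\{x\})$ lies in an annulus $r_{k+1}\le d(x,z)<r_k$ with $k\ge m$, and for that $k$ one must have $z\in\overline{G_k}^1$; since $\overline{G_k}^1\subseteq\overline{G_{m+1}}^1$ and $B(x,r_k)\subseteq B(x,r_{m+1})$ for all $k\ge m+1$, this gives
\[
B(x,\rho)\setminus A_x\subseteq\bigl(\overline{G_m}^1\cap B(x,\rho)\bigr)\cup\bigl(\overline{G_{m+1}}^1\cap B(x,r_{m+1})\bigr).
\]
The first set has $\mathcal H_1$-content below $\varepsilon_m\mu(B(x,\rho))/\rho$ because $\rho\le r_m\le\sigma_m$; the second is contained in $\overline{G_m}^1\cap B(x,\rho)$ (using $r_{m+1}<\rho$ and $\overline{G_{m+1}}^1\subseteq\overline{G_m}^1$) and so has $\mathcal H_1$-content below $\varepsilon_m\mu(B(x,\rho))/\rho$ as well. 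Hence $\rho\,\mathcal H_1(B(x,\rho)\setminus A_x)/\mu(B(x,\rho))<2^{-m+1}$ whenever $\rho\in(r_{m+1},r_m]$, and letting $\rho\to0^+$ finishes the proof.

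\textbf{The main obstacle.} The delicate point is the last density estimate. The naive bound for $\mathcal H_1(B(x,\rho)\setminus A_x)$ is a sum $\sum_{k\ge m}\mathcal H_1(\overline{G_k}^1\cap B(x,r_k))$ over all smaller dyadic scales, and this need not be $o(\mu(B(x,\rho))/\rho)$, since the thresholds $\sigma_k$ (hence the radii $r_k$) are not quantitatively controlled and $\mu(B(x,r))/r$ need not be comparable across very different scales. Building the family $\overline{G_j}^1$ to be nested decreasing from the outset is exactly what makes this tail telescope into the single term $\overline{G_{m+1}}^1\cap B(x,r_{m+1})$, which is then dominated by the $m$-th estimate. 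A minor secondary point is the passage from relative $1$-capacity to codimension-$1$ Hausdorff content in the second step, for which I would use the boxing-inequality covering already available in the paper rather than a direct comparison of the two set functions.
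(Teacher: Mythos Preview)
Your argument is correct, but it is substantially more elaborate than the paper's, and the elaboration stems from missing two simplifications.

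First, you overlooked that a \emph{single} index $j$ suffices. In the proof of Theorem~\ref{thm:fine continuity}, once $x\notin\overline{G_j}^1$ for one fixed $j$, the quasicontinuity conclusion of Theorem~\ref{thm:quasicontinuity} along $X\setminus G_j$ already gives, for \emph{every} $\varepsilon>0$, a radius $r_\varepsilon$ with the $\varepsilon$-estimate on $B(x,r_\varepsilon)\setminus G_j\supset B(x,r_\varepsilon)\setminus\overline{G_j}^1$. Hence the single $1$-finely open set $U:=X\setminus\overline{G_j}^1$ carries the full sequential multiple-limit convergence at $x$. There is no need for the nested family $(G_j)$ or the annular diagonalisation; one may simply take $A_x=U$.

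Second, for the density condition the paper bypasses the boxing-inequality rerun entirely: by Remark~\ref{rmk:capacities and Hausdorff contents} one has $\mathcal H_1(A)\le C\,\capa_1(A)$ for every set $A$, and for $A\subset B(x,r)$ with $r$ small one has $\capa_1(A)\le C\,\rcapa_1(A,B(x,2r))$ (this is the content of \cite[Proposition~6.16]{BB} used in the paper). Applying this with $A=B(x,r)\setminus U$ converts the $1$-thinness of $X\setminus U$ at $x$ (which is just the definition of $U$ being $1$-finely open) directly into the desired statement about $\mathcal H_1$. Your rerun of Lemma~\ref{lem:boxing inequality} and Proposition~\ref{prop:capacity of fine closure} recovers the same inequality by hand.

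In short: your nested/diagonal machinery, and the ``main obstacle'' you describe, arise only because you used a different $G_j$ for each scale $1/j$; the paper avoids this by noting that one $G_j$ already controls all scales at the given point, and then invokes the ready-made comparison $\mathcal H_1\le C\,\capa_1\le C\,\rcapa_1$ rather than rebuilding it. Both proofs are valid; the paper's is a two-line corollary of Theorem~\ref{thm:fine continuity}.
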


\begin{proof}
In the proof of Theorem \ref{thm:fine continuity} we showed that for $\mathcal H$-almost every $x\in X$, there exists a $1$-finely open set $U\ni x$ such that if $y_k\to x$ with $y_k\in U$, then
\[
\min_{l_2\in \{1,\ldots,n\}}|u^{l_1}(y_k)-u^{l_2}(x)|\to 0
\]
for all $l_1=1,\ldots,n$. By using first Remark \ref{rmk:capacities and Hausdorff contents} and then \cite[Proposition 6.16]{BB}, we get for small $r>0$
\[
\mathcal H_1(B(x,r)\setminus U)\le C\capa_1(B(x,r)\setminus U)\le C\rcapa_1(B(x,r)\setminus U,B(x,2r)).
\]
Thus we can take $A_x=U$.
\end{proof}

Roughly speaking, if Proposition \ref{prop:consequence of Lebesgue points} says that the complement of $A_x$ cannot have significant ``volume'' close to $x$, Proposition \ref{prop:consequence of fine continuity} says that it cannot have significant ``surface area'' either.

The reason for considering more than two jump values is explained in the following example, which is essentially from \cite[Example 5.1]{LaSh}.

\begin{example}\label{ex:one dimensional space}
Consider the one-dimensional space
\[
X:=\{x=(x_1,x_2) \in \R^2:\,x_1=0\text{ or }x_2=0\}
\]
consisting of the two coordinate axes.
Equip this space with the Euclidean metric inherited from $\R^2$, and the 1-dimensional Hausdorff measure. 
This measure is doubling and supports a $(1,1)$-Poincar\'e inequality.
Moreover, we can take $\gamma=1/4$ in \eqref{eq:definition of gamma}, and then the number of jump values defined in \eqref{eq:definition of n jump values} is $n=1/\gamma=4$.
Let
\[
u:=\chi_{\{x_1>0\}}+2\chi_{\{x_2>0\}}+3\chi_{\{x_1<0\}}+4\chi_{\{x_2<0\}}.
\]
For brevity, denote the origin $(0,0)$ by $0$. Now $S_u=\{0\}$ with $\mathcal H(\{0\})=2$, and
$(u^1(0),u^2(0),u^3(0),u^4(0))=(1,2,3,4)$. The function
\[
x\mapsto \{u^1(x),u^2(x),u^3(x),u^4(x)\}
\]
is easily seen to be upper hemicontinuous everywhere (even with respect to the metric topology), but this would not be the case if we considered fewer that $4$ jump values.
\end{example}

The following very simple example demonstrates that we cannot in general have $1$-fine upper hemicontinuity at \emph{every} point.

\begin{example}
Let $X=[-1,1]\times [-1,1]\subset \R^2$ with the Euclidean distance and the $2$-dimensional Lebesgue measure $\mathcal L^2$. Since we now have $\gamma=1/2$ in \eqref{eq:definition of gamma}, see e.g. \cite[Theorem 3.59]{AFP}, we only consider the two jump values $u^{\wedge}$ and $u^{\vee}$ of a given $\BV$ function $u$, recall \eqref{eq:definition of n jump values}.
Denoting $x=(x_1,x_2)$, let 
\[
u(x_1,x_2)=
\begin{cases}
1,\qquad x_1<0,\\
0,\qquad x_1\ge 0,\,x_2\le 0,\\
2,\qquad x_1\ge 0,\,x_2>0.
\end{cases}
\]
Clearly $u\in\BV(X)$. For brevity, denote the origin $(0,0)$ by $0$. Take $\eps=1$. If $x\mapsto (u^{\wedge}(x),u^{\vee}(x))$ were $1$-finely continuous at the origin, there would exist a $1$-finely open set $U\ni 0$ with
\[
\min \{ |u^{\wedge}(y)-u^{\wedge}(0)|,\, |u^{\wedge}(y)-u^{\vee}(0)| \} <1
\]
for all $y\in U$, so necessarily $u^{\wedge}(y)=0$ or $u^{\wedge}(y)=2$ at these points.
However, $U$ must necessarily intersect $\{y=(y_1,y_2):\,y_1<0\}$, since
\[
\liminf_{r\to 0^+}r\frac{\rcapa_1(\{y_1<0\}\cap B(0,r),B(0,2r))}{\mathcal L^2(B(0,r))}>0.
\]
On the other hand, $u^{\wedge}(y)=1$ for all $y\in \{y_1<0\}$. 
Thus $1$-fine upper hemicontinuity fails at the origin. However, it does hold at every other point, and $\mathcal H(0)=0$.
\end{example}

Note that if $E\subset X$ and $u=\chi_E$, then $x\in I_E$ means that $u^\wedge(x)=u^\vee(x)=1$, $x\in O_E$ means that 
$u^\wedge(x)=u^\vee(x)=0$, and $x\in \partial^*E$ means that 
$u^\wedge(x)=0$ and $u^\vee(x)=1$.

The following example concerning the enlarged rationals illustrates the need to consider upper hemicontinuity with respect to the $1$-fine topology instead of the metric topology.

\begin{example}
Consider the Euclidean space $\R^2$.
Let $\{q_i\}_{i\in\N}$ be an enumeration of $\mathbb{Q}\times\mathbb{Q}\subset\R^2$,
and define
\[
E:=\bigcup_{i\in\N} B(q_i,2^{-i}).
\]
Clearly $\mathcal L^2(E)\le \pi$. By the lower semicontinuity and subadditivity of perimeter, see \eqref{eq:Caccioppoli sets form an algebra}, we can estimate
\[
P(E,\R^2)\le \sum_{i=1}^{\infty} P(B(q_i,2^{-i}),\R^2)\le 2\pi \sum_{i=1}^{\infty}2^{-i},
\]
so that $P(E,\R^2)<\infty$, and then also $\mathcal H(\partial^*E)<\infty$.
However, $\partial E=\R^2\setminus E$.
Thus, denoting $u:=\chi_E$, for every $x\in O_E$ there exists a sequence $y_k\to x$ with $y_k\in E\subset I_E$ such that 
\[
u^{\wedge}(y_k)=u^{\vee}(y_k)=1\not \rightarrow 0=u^{\wedge}(x)=u^{\vee}(x).
\]
Thus at almost every point $x\in \R^2\setminus E$, the function $y\mapsto \{u^{\wedge}(y),u^{\vee}(y)\}$ fails to be upper hemicontinuous with respect to the metric topology.
\end{example}

%

\end{document}